\documentclass[10pt]{amsart}
\setlength{\textheight}{214mm}
\setlength{\textwidth}{127mm}
\usepackage{amssymb}
\usepackage{verbatim}
\usepackage{pb-diagram}
\usepackage[dvipdfmx]{graphicx}
\sloppy
\theoremstyle{plain}
  \newtheorem{thm}{Theorem}[section]
  \newtheorem{lem}[thm]{Lemma}

  \newtheorem{conj}[thm]{Conjecture}

  \newtheorem*{obs*}{Observation}
\theoremstyle{definition}

\theoremstyle{remark}
  \newtheorem{rem}[thm]{Remark}
  
  \newtheorem*{ack}{Acknowledgments}
\newcommand{\Z}{\mathbb{Z}}
\newcommand{\C}{\mathbb{C}}

\newcommand{\Q}{\mathbb{Q}}
\newcommand{\Vol}{\operatorname{Vol}}
\newcommand{\CS}{\operatorname{CS}}

\newcommand{\Hom}{\operatorname{Hom}}
\newcommand{\cs}{\operatorname{cs}}
\newcommand{\Tr}{\operatorname{Tr}}
\newcommand{\Tor}{\operatorname{Tor}}

\newcommand{\floor}[1]{\lfloor#1\rfloor}
\renewcommand{\Re}{\operatorname{Re}}
\renewcommand{\Im}{\operatorname{Im}}
\newcommand{\Image}{\operatorname{Image}}
\newcommand{\Ker}{\operatorname{Ker}}
\newcommand{\para}{\xi}
\numberwithin{equation}{section}
\hyphenation{Hi-kami Mura-kami Yoko-ta}
\allowdisplaybreaks
\begin{document}
\title
{Representations and the colored Jones polynomial of a torus knot}
\author{Kazuhiro Hikami}
\address{
Department of Mathematics Education,
School of Natural and Living Sciences Education,
Naruto University of Education,
748, Nakashima, Takashima, Naruto-cho, Naruto-shi,
772-8502 Japan.}
\email{khikami@gmail.com}
\author{Hitoshi Murakami}
\address{
Department of Mathematics,
Tokyo Institute of Technology,
Oh-okayama, Meguro, Tokyo 152-8551, Japan
}
\email{starshea@tky3.3web.ne.jp}
\date{\today}
\begin{abstract}
We show that for a torus knot the $SL(2;\C)$ Chern--Simons invariants and the $SL(2;\C)$ twisted Reidemeister torsions appear in an asymptotic expansion of the colored Jones polynomial.
This suggests a generalization of the volume conjecture that relates the asymptotic behavior of the colored Jones polynomial of a knot to the volume of the knot complement.
\end{abstract}
\keywords{knot, torus knot, volume conjecture, colored Jones polynomial, Chern--Simons invariant, Reidemeister torsion}
\subjclass[2000]{Primary 57M27 57M25 57M50}
\thanks{The authors are supported by Grant-in-Aid for Challenging Exploratory Research (21654053)}
\maketitle
\section{Introduction.}
In 1985, Jones introduced a knot invariant, the Jones polynomial, by using operator algebra \cite{Jones:BULAM385}.
It turns out to be a special case of a more general situation.
In fact for any simple Lie algebra $\mathfrak{g}$ and its irreducible representation $\rho$ one can define the quantum $(\mathfrak{g},\rho)$ invariant for knots (see for example \cite{Turaev:quantum}).
Then the Jones polynomial is regarded as the quantum $(\mathfrak{sl}(2;\C), V^2)$ invariant, where $V^2$ is the two-dimensional irreducible representation.
\par
Then, in 1989, Witten used Chern--Simons theory to describe the Jones polynomial in terms of path integral \cite{Witten:COMMP1989} and suggested quantum invariants for three-manifolds.
\par
Suppose that we are given a compact Lie group $G$ with Lie algebra $\mathfrak{g}$.
Let $K$ be a knot in the three-sphere $S^3$ and $V$ an irreducible representation of $G$.
Let $\mathcal{A}$ be the set of all $G$-connection on the trivial $G$-bundle over $S^3$.
For a $G$-connection $A$, define the Chern--Simons functional $L(A)$ to be
\begin{equation*}
  L(A)
  :=
  \frac{1}{4\pi}\int_{S^3}\Tr(A\wedge dA+\frac{2}{3}A\wedge A\wedge A).
\end{equation*}
Then Witten proposed the following Feynman path integral as a definition of the quantum invariant:
\begin{equation*}
  Z(S^3,K)
  :=
  \int_{\mathcal{A}}e^{\sqrt{-1}k\,L(A)}W_{V}(K;A)
  \mathcal{D}A,
\end{equation*}
where $W_{V}(K;A)$ is the Wilson loop, that is, the trace of the image in $V$ by the representation of the element in $G$ given by the parallel transport along $K$ using the connection $A$.
\par
If $G=SU(2)$ and $V$ is the $N$-dimensional irreducible representation, this defines the $N$-dimensional colored Jones polynomial $J_N\bigl(K;\exp(2\pi\sqrt{-1}/(k+2))\bigr)$.
\par
Since then many researches about these quantum invariants for knots and three-manifolds by both mathematicians and physicists.
\par
In 1995, Kashaev defined a yet another knot invariant $\langle{K}\rangle_N$ by using quantum dilogarithm \cite{Kashaev:MODPLA95}, where $N$ is an integer greater than one.
Moreover in \cite{Kashaev:LETMP97} he observed that for a few knots the limit $\log\left(\left|\langle{K}\rangle_N\right|\right)/N$ gives the hyperbolic volume of the knot complement $S^3\setminus{K}$.
He also conjectured this would be true for any hyperbolic knot.
Here a hyperbolic knot is a knot whose complement possesses a complete hyperbolic metric with finite volume.
\par
J.~Murakami and the second author proved that Kashaev's invariant is indeed a special value of the colored Jones polynomial \cite{Murakami/Murakami:ACTAM12001}.
More precisely, letting $J_N(K;q)$ be the colored Jones polynomial associated with the $N$-dimensional irreducible representation of the Lie algebra $\mathfrak{sl}(2;\C)$, we showed that $J_N\bigl(K;\exp(2\pi\sqrt{-1}/N)\bigr)$ is (essentially) equal to Kashaev's invariant.
We also generalized Kashaev's conjecture to the following conjecture (Volume Conjecture).
\begin{conj}[Volume Conjecture,\cite{Murakami/Murakami:ACTAM12001}]
\label{conj:VC}
For any knot, we have
\begin{equation*}
  2\pi\frac{\log\left|J_N\bigl(K;\exp(2\pi\sqrt{-1}/N)\bigr)\right|}{N}
  =
  \Vol(S^3\setminus{K}).
\end{equation*}
Here $\Vol$ is the simplicial volume (or the Gromov norm) \cite{Gromov:INSHE82} that is normalized so that it equals the sum of the hyperbolic volumes of the hyperbolic pieces in the JSJ decomposition \cite{Jaco/Shalen:MEMAM79,Johannson:1979} of the knot complement.
\par
Note that we normalize $J_N(K;q)$ so that $J_N(K;\text{unknot})=1$.
\end{conj}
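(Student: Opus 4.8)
The plan is to reduce the conjecture to an asymptotic analysis of an explicit state-sum model for Kashaev's invariant and then to identify the exponential growth rate with the hyperbolic volume through the geometry of an ideal triangulation. First I would use the identification of $J_N\bigl(K;\exp(2\pi\sqrt{-1}/N)\bigr)$ with Kashaev's invariant established in \cite{Murakami/Murakami:ACTAM12001}, so that it suffices to analyze the latter. For a hyperbolic knot one fixes an ideal triangulation of $S^3\setminus{K}$ into $k$ tetrahedra and writes the invariant as a $k$-fold sum (or, after a Gaussian/Fourier transform, a $k$-fold contour integral) in which each tetrahedron contributes a quantum-dilogarithm factor depending on $q=\exp(2\pi\sqrt{-1}/N)$.

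Second, I would replace each quantum dilogarithm by its semiclassical asymptotics as $N\to\infty$: the leading term of the logarithm of the quantum dilogarithm is $\tfrac{N}{2\pi\sqrt{-1}}\Li_2$ of the appropriate argument. This turns the summand into $\exp\!\bigl(\tfrac{N}{2\pi\sqrt{-1}}\,\Phi(z_1,\dots,z_k)\bigr)$ up to lower-order factors, where the potential $\Phi$ is a sum of dilogarithms and elementary terms encoding the gluing data. Applying the saddle-point (steepest-descent) method, the critical-point equations $\partial\Phi/\partial z_j=0$ turn out to be exactly Thurston's gluing equations for the triangulation, so the solution giving the complete hyperbolic structure is a critical point, and the value of $\Phi$ there equals the complex volume $\Vol(S^3\setminus{K})+\sqrt{-1}\,\CS(S^3\setminus{K})$, up to normalization and a factor of $\sqrt{-1}$. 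Taking absolute values and logarithms then yields $2\pi\log\bigl|J_N\bigl(K;\exp(2\pi\sqrt{-1}/N)\bigr)\bigr|/N\to\Vol(S^3\setminus{K})$.

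Third, to cover arbitrary knots I would reduce the general case to its geometric building blocks via the JSJ (torus) decomposition appearing in the statement: the simplicial volume is additive over the hyperbolic pieces and vanishes on the Seifert-fibered pieces. For torus knots, and more generally for the Seifert-fibered pieces, the colored Jones polynomial grows only subexponentially in $N$ (as is visible from the explicit Rosso--Jones formula), so it contributes $0$ to the growth rate, matching the vanishing of their simplicial volume; a gluing/satellite argument would then assemble the contributions of the hyperbolic pieces across the decomposing tori so that the exponential growth rate adds up to $\Vol(S^3\setminus{K})$.

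The principal obstacle is the rigorous justification of the saddle-point analysis, and this is precisely where the conjecture remains open in general. One must prove that the geometric critical point is the \emph{dominant} saddle, that is, that it maximizes $\Re\bigl(\tfrac{1}{2\pi\sqrt{-1}}\Phi\bigr)$ among all relevant critical points and over the homotopy class of the integration cycle, and that the contour can be deformed to pass through it without crossing the poles of the quantum dilogarithm. Controlling these competing saddles, the deformation of the cycle, and the uniformity of the dilogarithm asymptotics (including the subexponential prefactors) for a triangulation with arbitrarily many tetrahedra is the crux; it has been carried out only for specific knots and small triangulations, so a uniform treatment valid for every knot is the heart of the remaining difficulty.
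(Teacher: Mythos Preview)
The statement you are asked to prove is Conjecture~\ref{conj:VC}, the Volume Conjecture, and the paper does \emph{not} prove it. It is recorded there as an open conjecture, followed only by a list of special cases (torus knots, the figure-eight knot, a few links) for which it has been verified by separate, ad hoc arguments. So there is no ``paper's own proof'' to compare your proposal against; any proposal that purports to establish the statement in full generality must be assessed on its own merits.

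Your outline is the standard heuristic going back to Kashaev: rewrite the invariant via quantum dilogarithms attached to an ideal triangulation, replace each factor by its semiclassical asymptotics, and observe that the stationary-phase equations reproduce Thurston's gluing equations, so the geometric solution yields the complex volume. You are right that the crux is making the saddle-point step rigorous---showing the geometric saddle dominates and that the contour can be deformed to it---and you correctly flag this as the reason the conjecture is open. That part of your write-up is an honest assessment rather than a proof.

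There is, however, a genuine additional gap in your third step. You propose to handle a general knot by splitting along the JSJ tori, arguing that Seifert pieces contribute zero growth and that hyperbolic pieces contribute their volumes additively. The simplicial volume is indeed additive under such a decomposition, but the colored Jones polynomial is not known to respect JSJ decomposition in any way that would yield additivity of the exponential growth rate. The satellite formula expresses $J_N$ of a satellite as a cabling-type sum over many colors of the companion and pattern, and controlling the large-$N$ asymptotics of that sum is itself an open problem of comparable difficulty; it does not reduce to the hyperbolic pieces in the manner you suggest. So even granting your saddle-point step for hyperbolic knots, the passage to arbitrary knots via JSJ is not justified.
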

The volume conjecture has been proved to be true for the following knots and links.
\begin{itemize}
\item
any torus knot by Kashaev and Tirkkonen \cite{Kashaev/Tirkkonen:ZAPNS2000},
\item
the torus link of type $(2,2m)$ by the first author \cite{Hikami:COMMP2004},
\item
the figure-eight knot by Ekholm (see for example \cite{Murakami:ALDT_VI}),
\item
the hyperbolic knot $5_2$ by Kashaev and Yokota,
\item
Whitehead doubles of the torus knot of type $(2,a)$ by Zheng \cite{Zheng:CHIAM22007},
\item
twisted Whitehead links by Zheng \cite{Zheng:CHIAM22007},
\item
the Borromean rings by Garoufalidis and L{\^e} \cite{Garoufalidis/Le:2005},
\item
Whitehead chains by van~der~Veen \cite{van_der_Veen:ACTMV2009}.
\end{itemize}
\par
What happens if we replace the $N$th root of unity $2\pi\sqrt{-1}/N$ with another complex parameter $\para/N$?
Yokota and the second author proved that for the figure-eight knot if $\para$ is close to $2\pi\sqrt{-1}$, then the limit gives the hyperbolic volume and the Chern--Simons invariant of the three-manifold obtained from $S^3$ by Dehn surgery along the figure-eight knot with coefficient given by $\para$ \cite{Murakami/Yokota:JREIA2007}.
\par
Note that the space of Dehn surgeries along a hyperbolic knot is complex one-dimensional \cite{Thurston:GT3M}, and the parameter $\para$ in the colored Jones polynomial can be regarded as a parameter of Dehn surgeries.
For a hyperbolic knot, the complete hyperbolic structure with finite volume is give by an irreducible representation (holonomy representation) of the fundamental group of its complement at the Lie group $PSL(2;\C)$.
Therefore it would be possible to use $\para$ to parameterize representations at $PSL(2;\C)$ or $SL(2;\C)$.
\par
In this paper we show that for torus knots we can relate the colored Jones polynomial evaluated at $\exp(\para/N)$ to representations of the fundamental group of a knot complement at $SL(2;\C)$.
Moreover by considering an asymptotic expansion of the colored Jones polynomial we can obtain the $SL(2;\C)$ Chern--Simons invariant and the twisted Reidemeister torsion both associated with the corresponding representation.
\par
The paper is organized as follows.
In Section~\ref{sec:character_variety} we describe the character variety of a torus knot, which is used to introduce the twisted Reidemeister torsion and the Chern--Simons invariant in Sections~\ref{sec:Reidemeister_torsion} and \ref{sec:Chern_Simons_invariant}, respectively.
In Section~\ref{sec:torus_knot} we calculate an asymptotic behavior of the colored Jones polynomial evaluated at $\exp(\para/N)$ for $N\to\infty$, and in Section~\ref{sec:topology} we give topological interpretations of its coefficients.
In the last section (Section~\ref{sec:speculation}) we give some speculation for general knots giving an observation about the figure-eight knot.
\begin{ack}
The authors would like to thank J.~Dubois, V.~Mu\~noz, E.~Witten, and Y.~Yamaguchi for helpful comments.
Thanks are also due to the organizers of the workshop ``Chern--Simons Gauge Theory: 20 years after'' held at the Max Planck Institute for Mathematics in August 2009 hosted by the Hausdorff Center for Mathematics.
\end{ack}
\section{$SL(2,\C)$ character variety of a torus knot.}
\label{sec:character_variety}
Let $T(a,b)$ be the torus knot of type $(a,b)$, where $a$ and $b$ are coprime positive integers.
Throughout this paper we assume that $b$ is odd.
Let $X\bigl(T(a,b)\bigr)$ be the character variety of $\pi_1\bigl(S^3\setminus T(a,b)\bigr)$ of representations of $\pi_1\bigl(S^3\setminus T(a,b)\bigr)$ at $SL(2,\C)$ \cite{Culler/Shalen:ANNMA21983}.
So two homomorphisms from $\pi_1(S^3\setminus{T(a,b)})$ to $SL(2,\C)$ are regarded as equivalent if and only if they have the same trace.
\par
We will describe $X\bigl(T(a,b)\bigr)$ following \cite{Munoz:REVMC2009}.
\par
Let $\langle x,y\mid x^a=y^b\rangle$ be a presentation of $\pi_1(S^3\setminus{T(a,b)})$.
\par
There is a unique reducible component, which is homeomorphic to $\C$ by assigning $[\varphi_t]\in X\bigl(T(a,b)\bigr)$ to $t+t^{-1}\in\C$, where $\varphi_t$ sends $x$ to $\begin{pmatrix}t^b&0\\0&t^{-b}\end{pmatrix}$ and $y$ to $\begin{pmatrix}t^a&0\\0&t^{-a}\end{pmatrix}$.
Here square brackets mean the class of a representation in the character variety.
\par
The irreducible characters decompose into $(a-1)(b-1)/2$ components and each of them is homeomorphic to $\C$.
They are indexed by a pair of integers $(\alpha,\beta)$ such that $1\le\alpha\le a-1$, $1\le\beta\le b-1$, and that $\alpha\equiv\beta\pmod{2}$.
See also \cite[Theorem~1]{Klassen:TRAAM1991}, \cite[Theorem~2]{Dubois/Kashaev:MATHA2007}.
A representation with index $(\alpha,\beta)$ sends $x$ to an element with trace $2\cos(\pi\alpha/a)$ and $y$ to one with trace $2\cos(\pi\beta/b)$.
\par
The closure of the component indexed by $(\alpha,\beta)$ intersects the reducible component in two points $\left[\varphi_{\exp\bigl(k_1\pi\sqrt{-1}/(ab)\bigr)}\right]$ and $\left[\varphi_{\exp\bigl(k_2\pi\sqrt{-1}/(ab)\bigr)}\right]$, where 
\begin{align*}
  k_1&\equiv\alpha\pmod{a},\quad
  k_1\equiv-\beta\pmod{b},
  \\
  k_2&\equiv\alpha\pmod{a},\quad
  k_2\equiv\beta\pmod{b}.
\end{align*}
Note that $k_1$ and $k_2$ are uniquely determined by the formulas above.
\par
\begin{rem}
Our pair $(k_1,k_2)$ is different from Dubois and Kashaev's pair $(k_-,k_+)$ \cite[Theorem~2]{Dubois/Kashaev:MATHA2007}.
They choose $k_-$ and $k_+$ so that $k_-\equiv k_+\pmod{2}$.
\end{rem}
Conversely, given a positive integer $k$ that is not a multiple of neither $a$ nor $b$, we can define a pair $(\alpha,\beta)$ such that $1\le\alpha\le a-1$, $1\le\beta\le b-1$, and $\alpha\equiv\beta\pmod{2}$ as follows:
Define $\alpha$ to be the integer that is congruent modulo $a$ to $k$ with $1\le\alpha\le a-1$, $\beta'$ to be the integer that is congruent modulo $b$ to $k$ with $1\le\beta'\le b-1$.
If $\alpha\equiv\beta'\pmod2$ then put $\beta:=\beta'$, and if $\alpha\not\equiv\beta'\pmod2$ then put $\beta:=b-\beta'$.
Note that since we assume that $b$ is odd $\beta$ always has the same parity as $\alpha$.
\begin{rem}\label{rem:def_alpha_beta}
If $k$ defines $(\alpha,\beta)$ as above, then the pair $(k_1,k_2)$ defined by $(\alpha,\beta)$ is either $(k,-k)$ or $(-k,k)$ $\pmod{ab}$.
So the assignment of $k\in\{n\in\Z\mid 1\le n\le ab-1,a\nmid n,b\nmid n\}$ to $(\alpha,\beta)\in\{l\in\Z\mid 1\le l\le a-1\}\times\{m\in\Z\mid 1\le m\le b-1\}$ is a two-to-one correspondence.
\par
Note that in either case $\sin^2(\alpha\pi/a)\sin^2(\beta\pi/b)$, which appears in the twisted Reidemeister torsion (see \S\ref{sec:topology}), does not depend on the definition that we use and equals $\sin^2(k\pi/a)\sin^2(k\pi/b)$.
\end{rem}
\section{Twisted Reidemeister torsion for a knot.}
\label{sec:Reidemeister_torsion}
Let $K$ be a knot in $S^3$ and $\rho$ a representation of $\pi_1(S^3\setminus{K})$ at $SL(2;\C)$.
Put $C^{\ast}\bigl(S^3\setminus{K};\rho\bigr):=\Hom_{\Z[\pi_1(S^3\setminus{K})]}(C_{\ast}(\widetilde{S^3\setminus{K}};\Z),\mathfrak{sl}(2;\C))$.
Here $\widetilde{S^3\setminus{K}}$ is the universal cover of $S^3\setminus{K}$, $C_{\ast}(\widetilde{S^3\setminus{K}};\Z)$ is regarded as a $\Z[\pi_1(S^3\setminus{K})]$-module by the action of the deck transformation and $\mathfrak{sl}(2;\C)$ is regarded as a $\Z[\pi_1(S^3\setminus{K})]$-module via the adjoint representation.
\par
Let $\{0\}\to C^{0}\xrightarrow{d^0}C^{1}\xrightarrow{d^1}C^{2}\xrightarrow{d^3}C^{3}\to\{0\}$ be the corresponding cochain complex, where $C^{i}:=C^{i}(S^3\setminus{K};\rho)$ and $d^i$ is the coboundary map induced by the boundary map of $C_{\ast}(\widetilde{S^3\setminus{K}};\Z)$.
Put $B^{i}:=\Image(d^{i-1})\subset C^{i}$, $Z^{i}:=\Ker(d^i)\subset C^{i}$, and $H^i:=Z^{i}/B^{i}$.
\par
We choose bases $\mathbf{c}^i$ of $C^i$ and $\mathbf{h}^i$ of $H^i$.
Let $\tilde{\mathbf{h}}^i\subset Z^i$ be a lift of $\mathbf{h}^i$ and $\mathbf{b}^i\subset C^i$ be a set of elements such that $d^i(\mathbf{b}^i)$ forms a basis of $B^{i+1}$.
Since $B^{i+1}\cong C^i/Z^i$ and $H^i\cong Z^i/B^i$, the set $d^{i-1}(\mathbf{b}^{i-1})\cup\tilde{\mathbf{h}}^i\cup\mathbf{b}^i$ forms a basis of $C^i$.
Define $\left[\left(d^{i-1}(\mathbf{b}^{i-1})\cup\tilde{\mathbf{h}}^i\cup\mathbf{b}^i\right)/\mathbf{c}^i\right]$ to be the determinant of the change-of-basis matrix from $\mathbf{c}^i$ to $d^{i-1}(\mathbf{b}^{i-1})\cup\tilde{h}^i\cup\mathbf{b}^i$.
\par
Then the Reidemeister torsion (\cite{Reidemeister:ABHMS1935}, \cite{Franz:JREIA1935}, \cite{deRham:1936}, \cite{Milnor:ANNMA21962}, \cite{Turaev:USPMN1986}) with respect to $\mathbf{c}^i$ and $\mathbf{h}^i$ is defined to be
\begin{equation}\label{eq:torsion}
  \Tor(C^{\ast},\mathbf{c}^{\ast},\mathbf{h}^{\ast})
  :=
  \prod_{i=0}^{n}
  \left[
    \left(
      d^{i-1}(\mathbf{b}^{i-1})\cup\tilde{\mathbf{h}}^i\cup\mathbf{b}^i
    \right)
    /\mathbf{c}^i
  \right]^{(-1)^{i+1}}.
\end{equation}
It is known that $\Tor(C^{\ast},\mathbf{c}^{\ast},\mathbf{h}^{\ast})$ does not depend on the choice of $\mathbf{b}^i$ and $\tilde{\mathbf{h}}^i$.
It is also known that up to sign it depends only on the choice of $\mathbf{h}^{\ast}$.
(We need a cohomological orientation to define the sign but in this paper we do not need it.
See \cite{Turaev:USPMN1986} and \cite{Dubois:CANMB2006} for details.)
\par
To define a basis $\mathbf{h}^{i}$ of $H^{i}$ we need to choose a simple closed curve on $\partial{E_K}$, where $E_K:=S^3\setminus{\operatorname{Int}(N(K))}$ with $N(K)$ the regular neighborhood of $K$ in $S^3$.
\par
An irreducible representation $\rho$ is called $\gamma$-regular \cite{Porti:MAMCAU1997,Dubois:CANMB2006} for a simple closed curve $\gamma\subset\partial{E_K}$ if the following two conditions are satisfied.
\begin{itemize}
\item
The homomorphism $i^{\ast}\colon H^1(E_K;\rho)\to H^1(\gamma;\rho)$ induced by the inclusion $i\colon\gamma\hookrightarrow{E_K}$ is injective.
Note that $H^{\ast}(E_K;\rho)$ is isomorphic to $H^{\ast}(S^3\setminus{K};\rho)$.
\item
If $\Tr\left(\rho\left(\pi_1(\partial{E_K})\right)\right)\subset\{\pm2\}$, then $\rho(\gamma)$ is not $\pm{I}$, where $I$ is the identity matrix.
\end{itemize}
If $\rho$ is $\gamma$-regular, then $\dim H^1(S^3\setminus{K};\rho)=\dim H^2(S^3\setminus{K};\rho)=1$ and $\dim H^i(S^3\setminus{K};\rho)=0$ for $i\ne1,2$ \cite[Lemma~2]{Dubois:CANMB2006}.
So to define the Reidemeister torsion for a $\gamma$-regular representation $\rho$ we only need to choose a non-zero element of $H^1(S^3\setminus{K};\rho)$ and a non-zero element of $H^2(S^3\setminus{K};\rho)$.
We use $\gamma$ to define such an element of $H^1(S^3\setminus{K};\rho)=H^1(E_K;\rho)$ and the fundamental class $[\partial{E_K}]\in H^2(\partial{E_K};\Z)$ to define such an element of $H^2(S^3\setminus{K};\rho)$ (for details, see \cite[\S~3]{Dubois/Kashaev:MATHA2007} for example).
\par
Therefore given a simple closed curve $\gamma\subset\partial{E_K}$ such that $\rho$ is $\gamma$-regular one can define the Reidemeister torsion (\cite{Porti:MAMCAU1997}, \cite{Dubois:CANMB2006}) by \eqref{eq:torsion} up to sign.
It is denoted by $\mathbb{T}_{\gamma}^{K}(\rho)$ and called the twisted Reidemeister torsion.
\par
It is known that for a torus knot, any irreducible representation is both $\mu$-regular and $\lambda$-regular, where $\mu$ is the meridian, a loop that goes around the knot so that its linking number with the knot is one, and $\lambda$ is the preferred longitude, a loop that goes along the knot so that its linking number with the knot is zero \cite[Example~1]{Dubois:CANMB2006}.
It is also known that for a hyperbolic knot $K$, then an irreducible representation that defines a hyperbolic Dehn surgery is $\gamma$-regular, where $\gamma$ is the simple closed curve on $\partial{E_K}$ along which the surgery is performed \cite{Porti:MAMCAU1997}.
\section{Chern--Simons invariant for a knot.}
\label{sec:Chern_Simons_invariant}
We follow \cite{Kirk/Klassen:COMMP93} to define the $SL(2;\C)$ Chern--Simons invariant.
\par
For a closed three-manifold $M$, one can define the $SL(2;\C)$ Chern--Simons function $\cs_M\colon X(M)\to\C\pmod\Z$, where $X(M)$ is the $SL(2;\C)$ character variety of $M$.
Let $A$ be an $\mathfrak{sl}(2;\C)$-valued $1$-form on $M$ with $dA+A\wedge A=0$.
Then $A$ defines a flat connection of $M\times SL(2;\C)$ and so one can define a representation $\rho\colon\pi_1(M)\to SL(2;\C)$ by holonomy.
The Chern--Simons function is defined to be
\begin{equation*}
  \cs_M([\rho])
  :=
  \frac{1}{8\pi^2}
  \int_{M}
  \Tr(A\wedge dA+\frac{2}{3}A\wedge A\wedge A)
  \in\C\pmod\Z,
\end{equation*}
where $[\rho]$ is the class of $\rho$ in $X(M)$.
\par
Now we assume that $M$ has a boundary which is homeomorphic to a torus.
Denote by $X(\partial{M})$ the $SL(2;\C)$ character variety of the boundary $\partial{M}$.
\par
We define $E(\partial{M})$ as the quotient space of $\Hom(\pi_1(\partial{M}),\C)\times\C^{\ast}$ by a group $G$, where
\begin{equation*}
  G
  :=
  \langle
    X,Y,B\mid
    XYX^{-1}Y^{-1}=XBXB=YBYB=B^2=1
  \rangle
\end{equation*}
and it acts on $\Hom(\pi_1(\partial{M}),\C)\times\C^{\ast}$ by
\begin{equation}\label{eq:coordinate_change}
\begin{split}
  X\cdot(s,t;z)&:=(s+1,t;z\exp(-8\pi\sqrt{-1}t)), \\
  Y\cdot(s,t;z)&:=(s,t+1;z\exp(8\pi\sqrt{-1}s)), \\
  B\cdot(s,t;z)&:=(-s,-t;z).
\end{split}
\end{equation}
Here a pair $(s,t)$ is identified with the element $s\gamma^{\ast}+t\delta^{\ast}\in\Hom(\pi_1(\partial{M}),\C)$ with a fixed basis $(\gamma,\delta)$ of $\pi_1(\partial{M})\cong\Z\oplus\Z$.
Then $E(\partial{M})$ becomes a $\C^{\ast}$-bundle over $X(\partial{M})$.
Note that $X(\partial M)$ is identified with $\Hom(\pi_1(\partial M),\C)/G$ via the quotient map $q\colon\Hom(\pi_1(\partial{M}),\C)\to\Hom(\pi_1(\partial{M}),SL(2;\C))$ defined by
\begin{align*}
  q(\kappa)
  :=
  \left[
    \gamma
    \mapsto
    \begin{pmatrix}
      e^{2\pi\sqrt{-1}\kappa(\gamma)}&0 \\
      0&e^{-2\pi\sqrt{-1}\kappa(\gamma)}
    \end{pmatrix}
  \right]
\end{align*}
for $\gamma\in\pi_1(\partial{M})$.
\par
The Chern--Simons function  $\cs_M$ in this case is defined to be a map from $X(M)$ to $E(\partial{M})$ such that $p\circ\cs_M=i^{\ast}$, where $p\colon E(\partial{M})\to X(\partial{M})$ is the projection and $i^{\ast}\colon X(M)\to X(\partial{M})$ is induced from the inclusion map $i\colon \partial{M}\hookrightarrow M$.
\begin{equation*}
\begin{diagram}
  \node[2]{E(\partial{M})}\arrow{s,r}{p} \\
  \node{X(M)}\arrow{ne,t}{\cs_M}\arrow{e,t}{i^{\ast}}\node{X(\partial{M})}
\end{diagram}
\end{equation*}
See \cite[\S~3]{Kirk/Klassen:COMMP93} for the precise definition.
\par
If we have another three-manifold $M'$ with toral boundary, we can construct a closed three-manifold $M\cup_{\partial}M'$ by identifying $\partial{M}$ with $-\partial{M}'$.
Given a representation $\rho\colon M\cup_{\partial}M'\to SL(2;\C)$, the Chern--Simons invariant $\cs_{M\cup_{\partial}M'}([\rho])$ is given by $zz'$ if $\cs_{M}\left(\left[\rho\bigr|_{M}\right]\right)=[s,t;z]$ and $\cs_{M'}\left(\left[\rho\bigr|_{M'}\right]\right)=[s,t;z']$, where $\rho\bigr|_{M}$ and $\rho\bigr|_{M'}$ are the restrictions of $\rho$ to $M$ and $M'$ respectively.
Note that we use the same basis for $\pi_1(\partial{M})$ and $\pi_1(-\partial{M}')$.
\par
Suppose that $M$ is the complement of the interior of the regular neighborhood of a knot $K$ in $S^3$.
Let $\rho$ be a representation sending the meridian $\mu$ and the longitude $\lambda$ to the elements (up to conjugation) shown below.
\begin{align*}
  \rho(\mu)
  &=
  \begin{pmatrix}
    \exp(u/2) & \ast \\
    0         & \exp(-u/2)
  \end{pmatrix},
  \\
  \rho(\lambda)
  &=
  \begin{pmatrix}
    \exp(v/2) & \ast \\
    0         & \exp(-v/2)
  \end{pmatrix}.
\end{align*}
We also assume that the elements in $\Hom(\pi_1(\partial{M}),\C)$ sending $\mu$ to $u$ and $\lambda$ to $v$ form a basis.
Then we introduce the function $\CS_{u,v}([\rho])$ as follows.
\begin{equation*}
  \cs_{M}([\rho])
  =
  \left[
    \frac{u}{4\pi\sqrt{-1}},
    \frac{v}{4\pi\sqrt{-1}};
    \exp\left(\frac{2}{\pi\sqrt{-1}}\CS_{u,v}([\rho])\right)
  \right].
\end{equation*}
Note that $\CS_{u,v}([\rho])$ is defined modulo $\pi^2\Z$ and that it depends on lifts $(u,v)$ of $(\exp(u/2),\exp(v/2))$.
\begin{rem}
Note that we are using the $PSL(2;\C)$ normalization described in \cite[P.~543]{Kirk/Klassen:COMMP93}.
So our $\cs_{u,v}([\rho])$ is $-4$ times $f(u)$ in \cite{Neumann/Zagier:TOPOL85,Murakami:ACTMV2008}, and Kirk and Klassen's (and so Yoshida's \cite{Yoshida:INVEM85}) $f(u)$ is $\pi\sqrt{-1}/2\times\cs_{u,v}(u)$.
\end{rem}
\section{An asymptotic behavior of the colored Jones polynomial of a torus knot.}
\label{sec:torus_knot}
In this section we give asymptotic expansions of the colored Jones polynomial of a torus knot.
\par
Let $J_N(K;q)$ be the $N$-dimensional colored Jones polynomial of a knot $K$.
We normalize it so that $J_N(\text{unknot};q)=1$.
So using Witten's formulation $J_N(K;q)=Z(S^3,K)/Z(S^3,\text{unknot})$ with $G=SU(2)$ and $V$ is the $N$-dimensional irreducible representation.
Note that $J_2(K;q)=V_K(q^{-1})$ for any knot $K$, where $V_K(q)$ is the original Jones polynomial \cite{Jones:BULAM385}.
\par
Let $\Delta(K;t)$ be the Alexander polynomial for a knot $K$.
We normalize it so that $\Delta(K;t)=\Delta(K;t^{-1})$ and $\Delta(K;1)=1$.
\par
Now we consider the torus knot $T(a,b)$.
For a complex parameter $z$, we put
\begin{equation*}
  \tau_{a,b}(z)
  :=
  \frac{2\sinh(z)}{\Delta\bigl(T(a,b);e^{2z}\bigr)}.
\end{equation*}
Since it is well-known that
\begin{equation*}
  \Delta\bigl(T(a,b);t\bigr)
  =
  \frac{\left(t^{ab/2}-t^{-ab/2}\right)\left(t^{1/2}-t^{-1/2}\right)}
       {\left(t^{a/2}-t^{-a/2}\right)\left(t^{b/2}-t^{-b/2}\right)},
\end{equation*}
we have
\begin{equation*}
  \tau_{a,b}(z)
  =
  \frac{2\sinh(az)\sinh(bz)}{\sinh(abz)}.
\end{equation*}
Note that $(t^{1/2}-t^{-1/2})/\Delta(K;t)$ can be regarded as the (abelian) Reidemeister torsion (\cite[Theorem~4]{Milnor:ANNMA21962},\cite[Theorem~1.1.2]{Turaev:USPMN1986}).
Since we use cohomology to define the torsion but Milnor and Turaev use homology,our torsion is the inverse of theirs.
\par
Let $\mathcal{P}$ be the set of poles of $\tau_{a,b}(z)$, that is, we put
\begin{equation*}
  \mathcal{P}
  :=
  \left\{
    \dfrac{k\pi\sqrt{-1}}{ab}
    \Biggm|
    k\in\Z,
    a\nmid k,
    b\nmid k
  \right\}.
\end{equation*}
\par
We also put
\begin{equation*}
  A_{k}(\para;N)
  =
  \sqrt{-\pi}
  \exp\left(S_k(\para)\frac{N}{\para}\right)
  \left(\frac{N}{\para}\right)^{1/2}
  \left(T_k\right)^{1/2},
\end{equation*}
where
\begin{align*}
  S_k(\para)
  &:=
  \frac{-\left(2k\pi\sqrt{-1}-ab\para\right)^2}{4ab}
  \\
\intertext{and}
  T_k
  &:=
  \frac{16\sin^2(k\pi/a)\sin^2(k\pi/b)}{ab}.
\end{align*}
\par
We would like to know an asymptotic behavior of $J_N\bigl(T(a,b);\exp(\para/N)\bigr)$ for large $N$.
\par
The case where $\para=2\pi\sqrt{-1}$ corresponds to the volume conjecture (Conjecture~\ref{conj:VC}).
In this case, Kashaev and Tirkkonen \cite{Kashaev/Tirkkonen:ZAPNS2000} proved the following asymptotic expansion.
\begin{equation*}
\begin{split}
  &J_N\bigl(T(a,b);\exp(2\pi\sqrt{-1}/N)\bigr)
  \\
  \sim&
  e^{(ab-a/b-b/a)\pi\sqrt{-1}/(2N)}
  \\
  &\times
  \left(
    \frac{\pi^{3/2}}{2ab}
    \left(\frac{N}{\para}\right)^{3/2}
    \sum_{k=1}^{ab-1}
    (-1)^{k+1}k^2
    \exp\left(S_k(\para)\frac{N}{\para}\right)
    \left(T_k\right)^{1/2}
  \right.
  \\
  &\qquad
  \left.
    +
    \frac{1}{4}
    \sum_{j=1}^{\infty}
    \frac{a_j}{j!}
    \left(\frac{\para}{4abN}\right)^{j-1}
  \right),
\end{split}
\end{equation*}
where $a_l$ is the $2l$-th derivative of $2z\sinh{z}/\Delta\bigl(T(a,b);e^{2z}\bigr)=z\tau_{a,b}(z)$ at $z=0$.
For a relation to characters of conformal field theory, see \cite{Hikami/Kirillov:PHYLB2003}.
See also \cite{Dubois/Kashaev:MATHA2007} for a topological interpretation of this expansion.
\par
When $\para$ is not an integer multiple of $2\pi\sqrt{-1}$, we have the following theorem.
\begin{thm}\label{thm:asymptotic}
Let $\para$ be a complex number that is not an integral multiple of $2\pi\sqrt{-1}$.
We also assume that $\Im\para\ge0$ for simplicity.
\par
If $\para/2\not\in\mathcal{P}$, then we have
\begin{equation}\label{eq:not_pole}
\begin{split}
  &J_N\bigl(T(a,b);\exp(\para/N)\bigr)
  \\
  \sim&
  \frac{e^{(ab-a/b-b/a)\para/(4N)}}{2\sinh(\para/2)}
  \left(
    \tau_{a,b}(\para/2)
    +
    \sum_{j=1}^{\infty}
    \frac{\tau_{a,b}^{(2j)}(\para/2)}{j!}
    \left(\frac{\para}{4abN}\right)^{j}
  \right)
\end{split}
\end{equation}
when $\Re{\para}>0$ and
\begin{equation}
\begin{split}
  &J_N\bigl(T(a,b);\exp(\para/N)\bigr)
  \\
  \sim&
  \frac{e^{(ab-a/b-b/a)\para/(4N)}}{2\sinh(\para/2)}
  \left(
    \tau_{a,b}(\para/2)
    +
    \sum_{k=1}^{\floor{ab|\para|/(2\pi)}}
    (-1)^{k+1}A_k(\para;N)
    +
    \sum_{j=1}^{\infty}
    \frac{\tau_{a,b}^{(2j)}(\para/2)}{j!}
    \left(\frac{\para}{4abN}\right)^{j}
  \right)
\end{split}
\end{equation}
when $\Re{\para}\le0$ as $N\to\infty$, where $\tau_{a,b}^{(2j)}(\para/2)$ is the $(2j)$th derivative of $\tau_{a,b}(z)$ at $z=\para/2$ and $\floor{x}$ means the largest integer that does not exceed $x$.
\par
If $\para/2\in\mathcal{P}$ $($and it is not an integer multiple of $\pi\sqrt{-1}$$)$, then we have
\begin{equation}\label{eq:pole}
\begin{split}
  &
  J_N\bigl(T(a,b);\exp(\para/N)\bigr)
  \\
  \sim&
  \frac{e^{(ab-a/b-b/a)\para/(4N)}}{2\sinh(\para/2)}
  \left(\vphantom{\left(\frac{\para}{4abN}\right)^{j}}
    \tau_{a,b}^{(0)}(\para/2)
    +
    \frac{1}{2}(-1)^{ab|\para|/(2\pi)}A_{ab|\para|/(2\pi)}(\para;N)
  \right.
  \\
  &\quad
  \left.
    +
    \sum_{k=1}^{ab|\para|/(2\pi)-1}
    (-1)^{k+1}A_{k}(\para;N)
    +
    \sum_{j=1}^{\infty}
    \frac{\tau_{a,b}^{(2j)}(\para/2)}{j!}
    \left(\frac{\para}{4abN}\right)^{j}
  \right)
\end{split}
\end{equation}
as $N\to\infty$, where $\tau_{a,b}^{(0)}(\para/2)$ is the constant term of the Laurent expansion of $\tau_{a,b}(z)$ around $z=\para/2$.
\end{thm}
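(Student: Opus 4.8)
The plan is to start from the Rosso--Jones (Morton) formula, which writes the colored Jones polynomial of $T(a,b)$ as a finite Gaussian sum
\[
  J_N\bigl(T(a,b);q\bigr)
  =
  \frac{q^{ab(1-N^2)/4}}{q^{N/2}-q^{-N/2}}
  \sum_{k}\varepsilon_k\,q^{Q(k)},
\]
where $Q$ is quadratic with leading coefficient $ab$ (coming from the exponents $abk^2+(a\pm b)k\pm1/2$), $\varepsilon_k=\pm1$, and $k$ runs over an arithmetic progression of length of order $N$. Putting $q=\exp(\para/N)$, the denominator becomes exactly $2\sinh(\para/2)$, and completing the square in $Q$ — which replaces the two linear terms $(a\pm b)k$ by the common constant $-(a/b+b/a)/4$ — produces the prefactor $e^{(ab-a/b-b/a)\para/(4N)}/\bigl(2\sinh(\para/2)\bigr)$ together with a shifted Gaussian whose continuous limit has saddle point $\para/2$.

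Next I would convert the finite sum into a contour-integral representation by Poisson resummation (equivalently, the Mittag--Leffler expansion of $1/\sinh$); the Poisson-dual series reassembles the product of hyperbolic sines into precisely $\tau_{a,b}(z)=2\sinh(az)\sinh(bz)/\sinh(abz)$, yielding an identity of the form
\[
  J_N\bigl(T(a,b);e^{\para/N}\bigr)
  =
  \frac{e^{(ab-a/b-b/a)\para/(4N)}}{2\sinh(\para/2)}
  \sqrt{\frac{abN}{\pi\para}}
  \int_{\mathcal C}\tau_{a,b}(z)\,
  e^{-\frac{abN}{\para}(z-\para/2)^2}\,dz
\]
(up to an exponentially small error and a choice of rotation of $\mathcal C$ so that the Gaussian decays along it), the poles of the integrand being exactly the rescaled points of $\mathcal P$. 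Deforming $\mathcal C$ to the steepest-descent path through $z=\para/2$ and applying the Laplace method (Watson's lemma), the Taylor expansion of $\tau_{a,b}$ at $\para/2$ integrated term by term against the Gaussian gives $\tau_{a,b}(\para/2)+\sum_{j\ge1}\frac{\tau_{a,b}^{(2j)}(\para/2)}{j!}\bigl(\frac{\para}{4abN}\bigr)^j$: only even derivatives survive, and the moments $\int u^{2j}e^{-abNu^2/\para}\,du$ supply exactly the factors $(\para/(4abN))^j/j!$ once the prefactor $\sqrt{abN/(\pi\para)}$ cancels $\sqrt{\pi\para/(abN)}$. During the deformation the contour sweeps across poles $z=k\pi\sqrt{-1}/(ab)$; since $\Res_{z=k\pi\sqrt{-1}/(ab)}\tau_{a,b}(z)=(-1)^{k+1}\,2\sin(k\pi/a)\sin(k\pi/b)/(ab)$ and the Gaussian evaluated there equals $\exp\bigl(S_k(\para)N/\para\bigr)$, a short computation identifies $2\pi\sqrt{-1}$ times that residue, weighted by $\sqrt{abN/(\pi\para)}$, with exactly $(-1)^{k+1}A_k(\para;N)$ (note $T_k=0$ whenever $a\mid k$ or $b\mid k$, so only indices in $\mathcal P$ survive).

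It then remains to determine which poles are crossed. For $\Re\para>0$ the steepest-descent path stays, after the rotation, in a region free of poles, so only the Laplace series appears; for $\Re\para\le0$ the rotation needed to make the Gaussian decay forces the contour across the poles with $1\le k\le\floor{ab|\para|/(2\pi)}$ (geometrically, these are the $k\pi\sqrt{-1}/(ab)$ caught between the original segment of integration, running roughly from $-\para/2$ to $\para/2$, and the steepest-descent contour), giving the stated sum of $A_k$'s. Finally, when $\para/2\in\mathcal P$, i.e.\ $\para=2k_0\pi\sqrt{-1}/(ab)$ with $k_0=ab|\para|/(2\pi)\in\Z$, the saddle point coincides with the pole $z=k_0\pi\sqrt{-1}/(ab)$: I would indent the contour by a small semicircle about this point, contributing half the residue — hence the term $\frac{1}{2}(-1)^{k_0}A_{k_0}(\para;N)$ — while the principal-value part of the Laplace evaluation now sees only the constant term $\tau_{a,b}^{(0)}(\para/2)$ of the Laurent expansion (the polar part is odd about the saddle and integrates to zero), the poles $1\le k\le k_0-1$ being crossed as before. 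I expect the main obstacle to be precisely this degenerate case together with the rigorous bookkeeping of which poles the deformation crosses — pinning down the floor function, the dichotomy $\Re\para>0$ versus $\Re\para\le0$, and the Stokes-type jump at $\para/2\in\mathcal P$ — and making the error estimates uniform enough that the displayed series are genuine asymptotic expansions; by contrast, once the integral representation is in hand the residue and Laplace computations, though lengthy, are routine.
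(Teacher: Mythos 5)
Your route is the paper's route: an exact Gaussian-integral representation of $J_N\bigl(T(a,b);e^{\para/N}\bigr)$ with integrand $\tau_{a,b}(z)$ and saddle at $z=\para/2$ (this is precisely the Kashaev--Tirkkonen formula, which the paper quotes rather than re-derives from Rosso--Jones), followed by a contour shift to the saddle, Watson's lemma for the $\tau_{a,b}^{(2j)}(\para/2)$-series, and residues at $z=k\pi\sqrt{-1}/(ab)$ producing the $A_k(\para;N)$ terms. Your saddle and residue computations are correct (indeed the residue is $(-1)^{k+1}\,2\sin(k\pi/a)\sin(k\pi/b)/(ab)$; the paper's ``$\sinh(k\pi/b)$'' there is a typo), and your half-residue/principal-value sketch for $\para/2\in\mathcal{P}$ is the right idea for that case, which the paper itself outsources to Proposition~3.2 of the authors' earlier paper.

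The genuine gap is the step you yourself flag as the main obstacle: deciding which $A_k$'s survive. Your justification --- that for $\Re\para>0$ the deformation crosses no poles, while for $\Re\para\le0$ it crosses exactly the poles with $1\le k\le\floor{ab|\para|/(2\pi)}$ --- is false as a geometric statement. The admissible contours have direction $\varphi$ with $(\arg\para)/2-\pi/4<\varphi<(\arg\para)/2+\pi/4$, and the shifted contour meets the imaginary axis at height $(\Im\para-\Re\para\tan\varphi)/2$, which for the steepest-descent choice is $|\para|\tan\bigl((\arg\para)/2\bigr)/2$ rather than $|\para|/2$; so for $\arg\para$ just below $\pi/2$ and $|\para|$ large many poles are swept even though $\Re\para>0$, and for $\Re\para<0$ more than $\floor{ab|\para|/(2\pi)}$ of them can be (and for the steepest-descent line generally are) swept. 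The dichotomy and the floor cutoff in the theorem come not from pole counting but from the sign of $\Re\bigl(S_k(\para)/\para\bigr)=\bigl(k^2\pi^2/(ab|\para|^2)-ab/4\bigr)\Re\para$: when $\Re\para>0$ every crossed residue term has $k<ab|\para|/(2\pi)$, hence decays exponentially and is discarded; when $\Re\para<0$ the crossed terms with $k>ab|\para|/(2\pi)$ decay and are discarded, while those with $k\le\floor{ab|\para|/(2\pi)}$ grow and must be kept. This analysis, which also shows the expansion is independent of the admissible contour (the discrepancies being exponentially small), is the key bookkeeping step of the paper's proof and is missing from your proposal; without it the stated $\Re\para>0$ versus $\Re\para\le0$ dichotomy is not established. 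Note also that this decay argument is unavailable when $\Re\para=0$, where every $A_k$ is an oscillation of size $\sqrt{N}$ and the pole count must be exact rather than exact up to exponentially small terms; this is why the paper handles purely imaginary $\para$ (including the case $\para/2\in\mathcal{P}$) by a separate argument instead of the contour-sweep estimate, a distinction your uniform treatment glosses over.
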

\begin{rem}
If $\Re\para>0$, or $\Re\para\le0$ and $|\para|<2\pi/(ab)$, then $J_N\bigl(T(a,b);\exp(\para/N)\bigr)$ converges to $\tau_{a,b}(\para/2)/(2\sinh(\para/2))=1/\Delta\bigl(T(a,b);\exp{\para}\bigr)$.
Otherwise it diverges.
See Figure~\ref{fig}.
\par
Note that Garoufalidis and L{\^e} proved that for any knot $K$, $J_N\bigl(K;\exp(\para/N)\bigr)$ converges to $1/\Delta(K;\exp\para)$ when $|\para|$ is small enough  \cite{Garoufalidis/Le:aMMR}.
\begin{figure}[h]
\begin{center}
\includegraphics{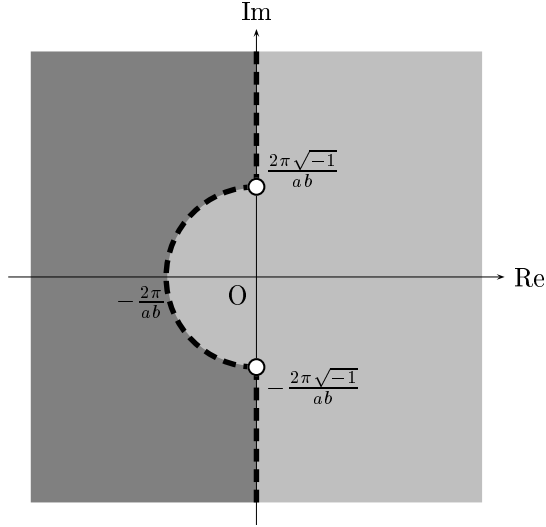}
\caption{
The colored Jones polynomial converges in the light gray area, and diverges in the gray area including the dashed lines and semicircle except for $\mathcal{P}$ indicated by the white circles.}
\label{fig}
\end{center}
\end{figure}
\end{rem}
\begin{proof}[Proof of Theorem~\ref{thm:asymptotic} for $\para$ with non-zero real part.]
We first prove Theorem~\ref{thm:asymptotic} where $\Re\para\ne0$.
Recall that we assume $\Im\para\ge0$.
\par
In \cite{Kashaev/Tirkkonen:ZAPNS2000}, Kashaev and Tirkkonen proved that $J_N\bigl(T(a,b);\exp(\para/N)\bigr)$ is given by the following integral.
\begin{equation*}
  J_N\bigl(T(a,b);\exp(\para/N)\bigr)
  =
  \Phi_{a,b,\para}(N)
  \int_{C}e^{abN(-z^2/\para+z)}\tau_{a,b}(z)\,dz,
\end{equation*}
where
\begin{equation*}
  \Phi_{a,b,\para}(N)
  :=
  \frac{1}{2\sinh(\para/2)}
  \sqrt{\frac{abN}{\pi\para}}
  e^{-abN\para/4+(ab-a/b-b/a)\para/(4N)}
\end{equation*}
and $C$ is the line passing through the origin with slope $\tan(\varphi)$, where $\varphi$ is chosen so that $(\arg\para)/2-\pi/4<\varphi<(\arg\para)/2+\pi/4$.
Note that this is to make the integral converges.
\par
Let $C_{\para}$ be the line that is parallel to $C$ and passes through $\para/2$ that is the critical point of the exponent of the integrand.
Then we have
\begin{equation*}
\begin{split}
  &
  J_N\bigl(T(a,b);\exp(\para/N)\bigr)
  \\
  =&
  \Phi_{a,b,\para}(N)
  \left(\vphantom{\sum_{k}}
    \int_{C_{\para}}e^{abN(-z^2/\para+z)}\tau_{a,b}(z)\,dz
  \right.
  \\
  &+
  \left.
    2\pi\sqrt{-1}
    \sum_{k}
    \operatorname{Res}
    \left(
      e^{abN(-z^2/\para+z)}
      \tau_{a,b}(z);z=k\pi\sqrt{-1}/(ab)
    \right)
  \right)
  \\
  =&
  \Phi_{a,b,\para}(N)
  \left(\vphantom{\sum_{k}}
    \int_{C_{\para}}e^{abN(-z^2/\para+z)}\tau_{a,b}(z)\,dz
  \right.
  \\
  &+
  \left.
    2\pi\sqrt{-1}
    \sum_{k}
    (-1)^{k+1}
    \frac{2\sin(k\pi/a)\sinh(k\pi/b)}{ab}
    \exp\left(N\left(\frac{k^2\pi^2}{ab\para}+k\pi\sqrt{-1}\right)\right)
  \right)
\end{split}
\end{equation*}
in a similar way to \cite{Murakami:INTJM62004}, where $k$ runs over integers such that $k\pi\sqrt{-1}/(ab)$ is between $C$ and $C_{\para}$.
\par
First we calculate the asymptotic expansion of the integral.
Putting $w:=z-\para/2$, we have
\begin{equation*}
\begin{split}
  \int_{C_{\para}}e^{abN(-z^2/\para+z)}\tau_{a,b}(z)\,dz
  &=
  \int_{C}e^{abN\big(-(w+\para/2)^2/\para+(w+\para/2)\bigr)}
  \tau_{a,b}(w+\para/2)\,dw
  \\
  &=
  e^{abN\para/4}
  \int_{C}e^{-abNw^2/\para}\tau_{a,b}(w+\para/2)\,dw
  \\
  &=
  e^{abN\para/4}e^{\varphi\sqrt{-1}}
  \int_{-\infty}^{\infty}
  e^{-abN e^{2\varphi\sqrt{-1}}t^2/\para}
  \tau_{a,b}
  \left(te^{\varphi\sqrt{-1}}+\para/2\right)
  \,dt.
\end{split}
\end{equation*}
We use Watson's Theorem, a special case of the steepest descent method, to obtain the asymptotic expansion of this integral.
\begin{thm}[\cite{Marsden/Hoffman:Complex_Analysis}(Theorem~7.2.7)]
Let $g(z)$ be analytic and bounded on a domain containing the real axis.
Set $f(z):=\int_{-\infty}^{\infty}e^{-zy^2/2}g(y)\,dy$ for $z$ real.
Then
\begin{equation*}
  f(z)
  \sim
  \sqrt{\frac{2\pi}{z}}
  \left(
    a_0+\frac{a_2}{z}+\frac{3!! a_4}{z^2}+
    \frac{5!! a_6}{z^3}+\cdots
  \right)
\end{equation*}
as $z\to\infty$, $\arg{z}=0$, where $g(z)=\sum_{n=0}^{\infty}a_nz^n$ near zero.
\end{thm}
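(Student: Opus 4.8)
The plan is to prove this as an instance of Laplace's method (Watson's lemma for a Gaussian weight), exploiting the fact that as $z\to+\infty$ the factor $e^{-zy^2/2}$ concentrates all the mass of the integral near $y=0$, so that the asymptotics of $f(z)$ are dictated entirely by the Taylor coefficients $a_n$ of $g$ at the origin. Throughout, $z$ is real and positive since $\arg z=0$, and $f(z)$ converges absolutely because $g$ is bounded, say $|g(y)|\le M$ on the real axis, while $e^{-zy^2/2}$ is integrable.

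First I would fix $\delta>0$ small enough that the power series $g(y)=\sum_{n\ge0}a_ny^n$ converges on $[-\delta,\delta]$, and split $f(z)=\int_{-\delta}^{\delta}+\int_{|y|>\delta}$. The tail is controlled by boundedness: $\bigl|\int_{|y|>\delta}e^{-zy^2/2}g(y)\,dy\bigr|\le M\int_{|y|>\delta}e^{-zy^2/2}\,dy\le \frac{2M}{\delta z}e^{-z\delta^2/2}$, using $\int_\delta^\infty e^{-zy^2/2}\,dy\le\frac1\delta\int_\delta^\infty y\,e^{-zy^2/2}\,dy=\frac{1}{\delta z}e^{-z\delta^2/2}$. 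This is exponentially small and hence negligible against every power of $1/z$.

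For the central part I would invoke the exact Gaussian moments. Differentiating $\int_{-\infty}^{\infty}e^{-zy^2/2}\,dy=\sqrt{2\pi/z}$ repeatedly in $z$ gives $\int_{-\infty}^{\infty}e^{-zy^2/2}y^{2m}\,dy=\sqrt{2\pi/z}\,\frac{(2m-1)!!}{z^m}$, while all odd moments vanish by symmetry; here $(-1)!!=1$, which reproduces the coefficients $1,1,3!!,5!!,\dots$ in the statement. Next, for a fixed $N$ I would write $g(y)=\sum_{n=0}^{2N+1}a_ny^n+R_N(y)$ with $|R_N(y)|\le C_N|y|^{2N+2}$ on $[-\delta,\delta]$ by Taylor's theorem. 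Integrating the polynomial part over $[-\delta,\delta]$ and then extending each monomial integral to all of $\mathbb{R}$ at the cost of another exponentially small error yields the partial sum $\sqrt{2\pi/z}\sum_{m=0}^{N}(2m-1)!!\,a_{2m}/z^m$. The remainder obeys $\bigl|\int_{-\delta}^{\delta}e^{-zy^2/2}R_N(y)\,dy\bigr|\le C_N\int_{-\infty}^{\infty}e^{-zy^2/2}|y|^{2N+2}\,dy=O\bigl(z^{-(N+3/2)}\bigr)$, which is of smaller order than the last retained term $z^{-(N+1/2)}$. Letting $N$ grow establishes the claimed expansion in the Poincar\'e sense.

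The routine but essential bookkeeping, and the only place demanding care, is verifying that every discarded piece (the two tail integrals and the Taylor remainder) is genuinely $o\bigl(z^{-(N+1/2)}\bigr)$, so that truncating after $N$ terms leaves an error smaller than the last term kept. The analyticity hypothesis is used precisely to guarantee a convergent power series and the clean remainder bound $|R_N(y)|\le C_N|y|^{2N+2}$, though smoothness at the origin together with boundedness on the line would already suffice.
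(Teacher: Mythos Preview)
Your argument is a correct and standard proof via Laplace's method: localize near the critical point $y=0$, bound the tails by exponential decay, Taylor-expand $g$ near $0$, and evaluate the resulting Gaussian moments. The error control is handled properly.

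However, there is nothing to compare against: the paper does not prove this theorem. It is quoted verbatim from Marsden and Hoffman's textbook (Theorem~7.2.7) as a black-box tool inside the proof of Theorem~\ref{thm:asymptotic}, and the paper immediately applies it to the integral $\int_{-\infty}^{\infty}e^{-abNe^{2\varphi\sqrt{-1}}t^2/\para}\tau_{a,b}(te^{\varphi\sqrt{-1}}+\para/2)\,dt$ without further justification. So your proof is not an alternative to the paper's argument but rather a supplement filling in a result the authors take as known.

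One small remark: the paper uses the theorem with a complex large parameter (namely $2abNe^{2\varphi\sqrt{-1}}/\para$, which has positive real part but nonzero argument in general), relying on the sentence ``this theorem also holds for $z$ with fixed $\arg z$ with $\Re z>0$.'' Your proof as written takes $z$ real and positive; the extension to fixed $\arg z$ with $\Re z>0$ goes through with the same estimates since $|e^{-zy^2/2}|=e^{-(\Re z)y^2/2}$, but if you want your proof to cover exactly what the paper needs, you should state that explicitly.
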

Note that this theorem also holds for $z$ with fixed $\arg{z}$ with $\Re{z}>0$.
\par
Since $\arg\left(e^{2\varphi\sqrt{-1}}/\para\right)=2\varphi-\arg{\para}$, we see $\Re\left(abN e^{2\varphi\sqrt{-1}}/\para\right)>0$.
Therefore we have the following asymptotic expansion.
\begin{equation*}
\begin{split}
  &
  \int_{-\infty}^{\infty}
  e^{-abN e^{2\varphi\sqrt{-1}}t^2/\para}
  \tau_{a,b}
  \left(te^{\varphi\sqrt{-1}}+\para/2\right)
  \,dt
  \\
  \sim&
  \sqrt{\frac{\pi\para}{abN e^{2\varphi\sqrt{-1}}}}
  \left(
    \sum_{j=0}^{\infty}
    \frac{(2j-1)!!c_{2j}}{(2abNe^{2\varphi\sqrt{-1}}/\para)^{j}}
  \right)
\end{split}
\end{equation*}
as $N\to\infty$, where $c_{2j}$ is the coefficient of $t^{2j}$ in the Taylor expansion of $\tau_{a,b}\left(te^{\varphi\sqrt{-1}}+\para/2\right)$ around $t=0$.
Since $c_{2j}=\dfrac{e^{2j\varphi\sqrt{-1}}}{(2j)!}\tau_{a,b}^{(2j)}(\para/2)$ with $\tau_{a,b}^{(2j)}(\para/2)$ the $2j$th derivative of $\tau_{a,b}(z)$ at $\para/2$, we have
\begin{equation*}
\begin{split}
  &
  \int_{C_{\para}}e^{abN(-z^2/\para+z)}\tau_{a,b}(z)\,dz
  \\
  \sim&
  e^{abN\para/4}e^{\varphi\sqrt{-1}}
  \sqrt{\frac{\pi\para}{abN e^{2\varphi\sqrt{-1}}}}
  \left(
    \sum_{j=0}^{\infty}
    \frac{(2j-1)!!\tau_{a,b}^{(2j)}(\para/2)}{(2j)!(2abN/\para)^{j}}
  \right)
  \\
  =&
  e^{abN\para/4}
  \sqrt{\frac{\pi\para}{abN}}
  \left(
    \sum_{j=0}^{\infty}
    \frac{\tau_{a,b}^{(2j)}(\para/2)}{j!}
    \left(\frac{\para}{4abN}\right)^{j}
  \right).
\end{split}
\end{equation*}
\par
Since $\tau_{a,b}^{(0)}(\para/2)=\tau_{a,b}(\para/2)$, we finally have the following asymptotic expansion.
\begin{equation*}
\begin{split}
  &J_N\bigl(T(a,b);\exp(\para/N)\bigr)
  \\
  \sim&
  \frac{e^{(ab-a/b-b/a)\para/(4N)}}{2\sinh(\para/2)}
  \\
  &\times
  \left(\vphantom{\left(\frac{\para}{4abN}\right)^{j}}
    \tau_{a,b}(\para/2)
    +
    \sum_{j=1}^{\infty}
    \frac{\tau_{a,b}^{(2j)}(\para/2)}{j!}
    \left(\frac{\para}{4abN}\right)^{j}
  \right.
  \\
  &\quad\quad
  \left.
    +
    \sum_{k}
    (-1)^{k+1}
    \frac{4\sin(k\pi/a)\sinh(k\pi/b)\sqrt{-\pi N}}{\sqrt{ab\para}}
    \exp
    \left(
      N\left(\frac{k^2\pi^2}{ab\para}+k\pi\sqrt{-1}-\frac{ab\para}{4}\right)
    \right)
  \right)
  \\
  =&
  \frac{e^{(ab-a/b-b/a)\para/(4N)}}{2\sinh(\para/2)}
  \left(\vphantom{\left(\frac{\para}{4abN}\right)^{j}}
    \tau_{a,b}(\para/2)
    +
    \sum_{k}
    (-1)^{k+1}A_k(\para;N)
    +
    \sum_{j=1}^{\infty}
    \frac{\tau_{a,b}^{(2j)}(\para/2)}{j!}
    \left(\frac{\para}{4abN}\right)^{j}
  \right).
\end{split}
\end{equation*}
\par
Now we consider the range of $k$.
\par
We observe that $C_{\para}$ crosses the imaginary axis at $\sqrt{-1}(\Im\para-\Re\para\tan\varphi)/2$, and when $\varphi$ increases from $(\arg\para)/2-\pi/4$ to $(\arg\para)/2+\pi/4$, the crossing point goes from $\sqrt{-1}\left(\Im\para-\Re\para\tan\bigl((\arg\para)/2-\pi/4\bigr)\right)$ to $\sqrt{-1}\left(\Im\para-\Re\para\tan\bigl((\arg\para)/2+\pi/4\bigr)\right)$ downwards (upwards, respectively) if $0\le\arg\para<\pi/2$ (if $\pi/2<\para\le\pi$, respectively).
Note that if $\pi/2<\para\le\pi$, $C_{\para}$ can be parallel to the imaginary axis but we avoid this.
Since
\begin{equation*}
\begin{split}
  \tan^2((\arg\para)/2\pm\pi/4)
  &=
  \frac{1-\cos(\arg\para\pm\pi/2)}{1+\cos(\arg\para\pm\pi/2)}
  \\
  &=
  \frac{1\pm\sin(\arg\para)}{1\mp\sin(\arg\para)}
  \\
  &=
  \frac{\bigl(1\pm\sin(\arg\para)\bigr)^2}{\cos^2(\arg\para)}
  \\
  &=
  \frac{(|\para|\pm\Im\para)^2}{(\Re\para)^2},
\end{split}
\end{equation*}
we have
\begin{equation*}
  \tan\bigl((\arg\para)/2\pm\pi/4\bigr)
  =
  \dfrac{\Im\para\pm|\para|}{\Re\para}.
\end{equation*}
and
\begin{equation*}
  \Im\para-\Re\para\tan\bigl((\arg\para)/2\pm\pi/4\bigr)
  =
  \mp|\para|.
\end{equation*}
So if $0\le\arg\para<\pi/2$, then the crossing point is between $-\sqrt{-1}|\para|$ and $\sqrt{-1}|\para|$, and  $k$ runs over integers that are not multiples of $a$ or $b$ with $1\le k\le M$ for any integer $M$ satisfying $0<M<ab|\para|/(2\pi)$.
If $\pi/2<\para\le\pi$, then the crossing point is above $\sqrt{-1}|\para|$ or below $-\sqrt{-1}|\para|$, and $k$ runs over all integers that are not multiples of $a$ or $b$ with $1\le k\le M'$ for any integer $M'$ with $M'>ab|\para|/(2\pi)$.
\par
So when $0\le\arg\para<\pi/2$, we have
\begin{multline}\label{eq:quadrant1}
  J_N\bigl(T(a,b);\exp(\para/N)\bigr)
  \\
  =
  \frac{e^{(ab-a/b-b/a)\para/(4N)}}{2\sinh(\para/2)}
  \left(
    A_0(\para)
    +
    \sum_{\stackrel{1\le k\le M,}{a\nmid k,b\nmid k}}
    (-1)^{k+1}A_k(\para;N)
    +
    \sum_{j=1}^{\infty}
    \frac{\tau_{a,b}^{(2j)}(\para/2)}{j!}
    \left(\frac{\para}{4abN}\right)^{j}
  \right)
\end{multline}
for any integer $M$ with $0<M<ab|\para|/(2\pi)$.
When $\pi/2<\arg\para\le\pi$, we have
\begin{multline}\label{eq:quadrant2}
  J_N\bigl(T(a,b);\exp(\para/N)\bigr)
  \\
  =
  \frac{e^{(ab-a/b-b/a)\para/(4N)}}{2\sinh(\para/2)}
  \left(
    A_0(\para)
    +
    \sum_{\stackrel{1\le k\le M',}{a\nmid k,b\nmid k}}
    (-1)^{k+1}A_k(\para;N)
    +
    \sum_{j=1}^{\infty}
    \frac{\tau_{a,b}^{(2j)}(\para/2)}{j!}
    \left(\frac{\para}{4abN}\right)^{j}
  \right).
\end{multline}
for any integer $M'$ with $M'>ab|\para|/(2\pi)$.
\par
Note that since the real part of $S_k(\para)/\para$ is
\begin{equation*}
  \left(\frac{k^2\pi^2}{ab|\para|^2}-\frac{ab}{4}\right)
  \Re\para,
\end{equation*}
the real part of the coefficient of $N$ in the exponent in $A_k(\para;N)$ is positive if and only if $\Re\para>0$ and $k>ab|\para|/(2\pi)$, or $\Re\para<0$ and $k<ab|\para|/(2\pi)$, negative if and only if $\Re\para>0$ and $k<ab|\para|/(2\pi)$, or $\Re\para<0$ and $k>ab|\para|/(2\pi)$, and zero if and only if $k=ab|\para|/(2\pi)$.
\par
Therefore in \eqref{eq:quadrant1} we can ignore all the $k$ since $A_k(\theta;N)$ decays exponentially, and in \eqref{eq:quadrant2} we can ignore $k$ with $k>ab|\para|/(2\pi)$.
Noting that if $a$ or $b$ divides $k$ then $A_k(\para;N)=0$, we finally have
\begin{multline*}
  J_N\bigl(T(a,b);\exp(\para/N)\bigr)
  \\
  \sim
  \frac{e^{(ab-a/b-b/a)\para/(4N)}}{2\sinh(\para/2)}
  \left(
    \tau_{a,b}(\para/2)
    +
    \sum_{j=1}^{\infty}
    \frac{\tau_{a,b}^{(2j)}(\para/2)}{j!}
    \left(\frac{\para}{4abN}\right)^{j}
  \right)
\end{multline*}
when $\Re{\para}>0$ and
\begin{multline*}
  J_N\bigl(T(a,b);\exp(\para/N)\bigr)
  \\
  \sim
  \frac{e^{(ab-a/b-b/a)\para/(4N)}}{2\sinh(\para/2)}
  \left(
    \tau_{a,b}(\para/2)
    +
    \sum_{k=1}^{\floor{ab|\para|/(2\pi)}}
    (-1)^{k+1}A_k(\para;N)
    +
    \sum_{j=1}^{\infty}
    \frac{\tau_{a,b}^{(2j)}(\para/2)}{j!}
    \left(\frac{\para}{4abN}\right)^{j}
  \right).
\end{multline*}
when $\Re{\para}<0$.
\end{proof}
\begin{rem}
When $\Re{\para}<0$ and $|\para|<2\pi/(ab)$ there is no $A_k(\para;N)$ term.
When $\Re{\para}<0$ and $|\para|=2\pi/(ab)$ the term $A_1(\para;N)$ oscillates.
\end{rem}
\begin{proof}[Proof of Theorem~\ref{thm:asymptotic} for purely imaginary $\para$.]
If $\para$ is purely imaginary, we have already shown the following formulas in \cite[Proposition~3.2]{Hikami/Murakami:COMCM2008}.
\par
If $\Re\para=0$ and $\para/2\not\in\mathcal{P}$, then we have
\begin{equation*}
\begin{split}
  &J_N\bigl(T(a,b);\exp(\para/N)\bigr)
  \\
  \sim&
  \frac{e^{(ab-a/b-b/a)\para/(4N)}}{2\sinh(\para/2)}
  \left(
    \tau_{a,b}(\para/2)
    +
    \sum_{k=1}^{\floor{ab|\para|/(2\pi)}}
    (-1)^{k+1}A_{k}(\para;N)
    +
    \sum_{j=1}^{\infty}
    \frac{\tau_{a,b}^{(2j)}(\para/2)}{j!}
    \left(\frac{\para}{4abN}\right)^{j}
  \right).
\end{split}
\end{equation*}
If $\para/2$ is in $\mathcal{P}$ but not an integral multiple of $2\pi\sqrt{-1}$, then we have
\begin{equation*}
\begin{split}
  &J_N\bigl(T(a,b);\exp(\para/N)\bigr)
  \\
  \sim&
  \frac{e^{(ab-a/b-b/a)\para/(4N)}}{2\sinh(\para/2)}
  \left(\vphantom{\left(\frac{\para}{4abN}\right)^{j}}
    \tau_{a,b}^{(0)}(\para/2)
    +
    \frac{1}{2}
    (-1)^{ab|\para|/(2\pi)}
    A_{ab|\para|/(2\pi)}(\para;N)
  \right.
  \\
  &\left.
    \quad+
    \sum_{k=1}^{ab|\para|/(2\pi)-1}
    (-1)^{k+1}A_{k}(\para;N)
    +
    \sum_{j=1}^{\infty}
    \frac{\tau_{a,b}^{(2j)}(\para/2)}{j!}
    \left(\frac{\para}{4abN}\right)^{j}
  \right),
\end{split}
\end{equation*}
where $\tau_{a,b}^{(0)}(\para/2)$ means the constant term of the Laurent expansion of $\tau_{a,b}(z)$ around $z=\para/2$.
This completes the proof.
\end{proof}
\section{A topological interpretation of the asymptotic behavior.}
\label{sec:topology}
In this section we study a topological interpretation of the term $A_k(\para;N)$ ($k\ge1$).
Given a positive integer $k$ that is not a multiple of $a$ nor $b$, we associate a pair of integers $(\alpha,\beta)$ as described in \S\ref{sec:character_variety}.
\subsection{A topological interpretation of $S_k(\para)$.}
Let $\rho_{\alpha,\beta}$ be an irreducible representation of $\pi_1\bigl(S^3\setminus{T(a,b)}\bigr)$ at $SL(2;\C)$ which is in the component of the character variety indexed by $(\alpha,\beta)$.
\par
The fundamental group of $S^3\setminus{T(a,b)}$ has a presentation $\pi_1\bigl(S^3\setminus{T(a,b)}\bigr)=\langle x,y\mid x^a=y^b\rangle$.
Then the longitude $\lambda$ can be expressed as $\lambda=x^{a}\mu^{-ab}$, where $\mu$ is the meridian.
Up to conjugation, we may assume that the images of $\mu$ and $\lambda$ are as follows.
\begin{align*}
  \rho_{\alpha,\beta}(\mu)
  &=
  \begin{pmatrix}
    \mathfrak{m} & \ast \\
    0            & \mathfrak{m}^{-1}
  \end{pmatrix},
  \\
  \rho_{\alpha,\beta}(\lambda)
  &=
  \begin{pmatrix}
    \mathfrak{l} & \ast \\
    0            & \mathfrak{l}^{-1}
  \end{pmatrix}.
\end{align*}
Since $\lambda\mu^{ab}=x^a$ and $x^a=\pm I$ (see for example \cite[Lemma~2.2]{Munoz:REVMC2009}), we have $\mathfrak{l}=\pm\mathfrak{m}^{-ab}$.
\par
If we put $\mathfrak{m}=\exp(u/2)$ and $\mathfrak{l}=-\exp(v/2)$, we have $\exp(v/2)=\pm\exp(-abu/2)$.
Therefore $v$ can be expressed (modulo $2\pi\sqrt{-1}$) in terms of $u$.
We choose $-ab(u+2\pi\sqrt{-1})+2(k-1)\pi\sqrt{-1}$ as such an expression and denote it by $v_k(u)$.
\par
Using the pair $(u,v_k(u))$ with $u:=\para-2\pi\sqrt{-1}$, we can prove that the function $\CS_{u,v_k(u)}([\rho_{\alpha,\beta}])$ defined in \S\ref{sec:Chern_Simons_invariant} can be expressed in terms of $S_k(\para)$.
\begin{thm}\label{thm:CS}
Let $\rho_{\alpha,\beta}$ be an irreducible representation such that $[\rho_{\alpha,\beta}]$ is in the component of $X\bigl(S^3\setminus{T(a,b)}\bigr)$ indexed by $(\alpha,\beta)$.
If we put $v_k(u):=-ab(u+2\pi\sqrt{-1})+2(k-1)\pi\sqrt{-1}$, then we have
\begin{equation*}
  \CS_{u,v_k(u)}\left([\rho_{\alpha,\beta}]\right)
  =
  S_k(\para)-\pi\sqrt{-1}u-\frac{uv_k(u)}{4}
\end{equation*}
with $u:=\para-2\pi\sqrt{-1}$, that is, the following equality holds.
\begin{equation*}
  \cs_{T(a,b)}([\rho_{\alpha,\beta}])
  =
  \left[
    \frac{u}{4\pi\sqrt{-1}},\frac{v_k(u)}{4\pi\sqrt{-1}};
    \exp
    \left(
      \frac{2}{\pi\sqrt{-1}}
      \left(S_k(\para)-\pi\sqrt{-1}u-\frac{uv_k(u)}{4}\right)
    \right)
  \right].
\end{equation*}
\end{thm}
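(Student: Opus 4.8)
The plan is to use Kirk and Klassen's deformation method for the $SL(2;\C)$ Chern--Simons invariant, set up in \S\ref{sec:Chern_Simons_invariant}. Write $M:=S^3\setminus T(a,b)$ and fix the basis $(\mu,\lambda)$ of $\pi_1(\partial M)$. By \S\ref{sec:character_variety} the closure of the component of $X(M)$ indexed by $(\alpha,\beta)$ meets the reducible (abelian) component at a bifurcation point $\left[\varphi_{t_0}\right]$ with $t_0=\exp\bigl(k_1\pi\sqrt{-1}/(ab)\bigr)$. I would choose a path $\{[\rho_\sigma]\}_{0\le\sigma\le1}$ in $X(M)$ with $\rho_0=\varphi_{t_0}$ and $\rho_1=\rho_{\alpha,\beta}$ lying inside the $(\alpha,\beta)$-component, and, for a concrete base value, prepend to it a path inside the reducible component joining $\varphi_{t_0}$ to the trivial representation. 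Along the whole path one may arrange, up to conjugation, that $\rho_\sigma(\mu)$ and $\rho_\sigma(\lambda)$ are upper triangular with respective $(1,1)$-entries $\exp\bigl(u(\sigma)/2\bigr)$ and $-\exp\bigl(v(\sigma)/2\bigr)$, with $v(\sigma)=v_k\bigl(u(\sigma)\bigr)$ throughout.

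The structural fact that makes the computation explicit is that on the $(\alpha,\beta)$-component $\rho_{\alpha,\beta}(\lambda)\rho_{\alpha,\beta}(\mu)^{ab}=\rho_{\alpha,\beta}(x^a)=\pm I$ (see \cite[Lemma~2.2]{Munoz:REVMC2009}), so that $\mathfrak{l}=\pm\mathfrak{m}^{-ab}$ and hence, after fixing the branch, $v=v_k(u)=-ab(u+2\pi\sqrt{-1})+2(k-1)\pi\sqrt{-1}$ is an \emph{affine} function of $u$ along the path. Consequently, when one feeds the pair $\bigl(s(\sigma),t(\sigma)\bigr)=\bigl(u(\sigma)/(4\pi\sqrt{-1}),v(\sigma)/(4\pi\sqrt{-1})\bigr)$ into Kirk and Klassen's deformation formula for the third coordinate $z$ of $\cs_M$ (the bilinear expression in $s$, $t$ and their derivatives determined by the cocycle \eqref{eq:coordinate_change}), the integrand is \emph{constant} in $\sigma$, the integral is elementary, and $\log z$ comes out affine in $u$. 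On the reducible part $\lambda\mapsto I$, so $v\equiv0$ there, that part contributes nothing, and it pins down the base value from the trivial representation where $z=1$. Combining the contributions and reorganizing using $u=\para-2\pi\sqrt{-1}$ and $v_k(u)+2\pi\sqrt{-1}=2k\pi\sqrt{-1}-ab\para$, the terms $-\pi\sqrt{-1}u$ and $-uv_k(u)/4$ (which come from the coordinate normalization in the definition of $\CS_{u,v}$) combine with the affine output to produce exactly $S_k(\para)=-\bigl(2k\pi\sqrt{-1}-ab\para\bigr)^2/(4ab)$; note that the quadratic parts cancel, consistently with $\CS_{u,v_k(u)}$ being affine in $u$.

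The delicate points are arithmetic and normalization-theoretic. First, one must check that the branch $v_k$ is the continuous one along the chosen path and is matched to the correct component: this ties the parity of $\alpha$ (equivalently, whether $x^a=I$ or $-I$) to the residue of $k$, and it uses the congruences for $k_1,k_2$ from \S\ref{sec:character_variety} together with Remark~\ref{rem:def_alpha_beta}. Second, since $\CS_{u,v}$ is only defined modulo $\pi^2\Z$, the final identity need only hold modulo $\pi^2\Z$; verifying this reduces to a congruence modulo $ab$ relating $k$, $k_1$ and $ab$, which again follows from the arithmetic of \S\ref{sec:character_variety}. I expect this modular matching, together with pinning down the precise sign and normalization in Kirk and Klassen's formula (the $PSL(2;\C)$ normalization, the orientation of the basis $(\mu,\lambda)$, and the base value of $\cs_M$ on the reducible component), to be the main obstacle; the remainder is routine bookkeeping. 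Alternatively, one could bypass the deformation argument and instead translate Kirk and Klassen's direct computation of the Chern--Simons invariant of Seifert fibered manifolds with toral boundary \cite{Kirk/Klassen:COMMP93} into the present $\CS_{u,v}$ notation, in which case the entire content is in the rewriting.
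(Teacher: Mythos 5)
Your route is genuinely different from the paper's. The paper does not run a deformation argument at all: it quotes the closed formula of Dubois and Kashaev \cite[Proposition~4]{Dubois/Kashaev:MATHA2007} for $\cs_{T(a,b)}([\rho_{\alpha,\beta}])$ (formula \eqref{eq:Dubois_Kashaev}, adjusted by the $PSL(2;\C)$ normalization), then changes the lift of the boundary data via the action \eqref{eq:coordinate_change} (shifting the second coordinate by $(k-ab-2)/2$), invokes the congruence $k^2\equiv(\beta ad+\varepsilon\alpha bc)^2\pmod{ab}$, and rewrites everything with $u=\para-2\pi\sqrt{-1}$. Your plan instead re-derives the invariant from scratch by Kirk--Klassen's variation formula along a path through the abelian component and then along the $(\alpha,\beta)$-component, using that $v=v_k(u)$ is affine in $u$ there. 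That is a legitimate strategy (it is essentially what sits inside the black box you would otherwise cite), and your structural observations --- $\rho(\lambda)\rho(\mu)^{ab}=\pm I$, affineness of $v_k$, cancellation of the quadratic terms in $S_k(\para)-\pi\sqrt{-1}u-uv_k(u)/4$ --- are correct.

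However, as written the proposal has a genuine gap: the variation formula along the irreducible component only determines $\CS_{u,v_k(u)}$ up to an additive constant (the slope in $u$ is the easy part), and every piece of the $k$-dependence in $S_k(\para)$ --- in particular the constant $k^2\pi^2/(ab)$ modulo $\pi^2\Z$ --- comes from the base value at the bifurcation point in the lift $\bigl(u/(4\pi\sqrt{-1}),v_k(u)/(4\pi\sqrt{-1})\bigr)$. Your claim that the abelian part ``contributes nothing'' is only true in the lift with $t\equiv 0$; to start the irreducible leg you must switch at the junction to the lift $v_k(u_0)$, and by \eqref{eq:coordinate_change} this multiplies the $z$-coordinate by $\exp\bigl(8\pi\sqrt{-1}\,n\,u_0/(4\pi\sqrt{-1})\bigr)$ with $n=\bigl(v_k(u_0)-0\bigr)/(4\pi\sqrt{-1})$ and $u_0=2k_1\pi\sqrt{-1}/(ab)$; both $n$ and $u_0$ are linear in $k$, and this corner factor (together with the question of whether the required shift is an integer at all, which is tied to the parity bookkeeping $x^a=\pm I$, the sign convention $\mathfrak{l}=-\exp(v/2)$, and the choice $k_1$ versus $k_2$) is exactly what must reproduce $S_k(\para)$. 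You explicitly defer all of this as ``the main obstacle,'' but it is not peripheral normalization: it is the content of the theorem, and without carrying it out (or replacing it by the citation the paper uses) the stated equality is not established. Two smaller points: the Kirk--Klassen integrand is not literally constant in $\sigma$ (only elementary after using $v=-abu+c$), and observing that the right-hand side is affine in $u$ is a consistency check, not a determination of the constant.
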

\begin{proof}
From a formula by Dubois and Kashaev \cite[Proposition~4]{Dubois/Kashaev:MATHA2007} we have
\begin{multline}\label{eq:Dubois_Kashaev}
  \cs_{T(a,b)}([\rho_{\alpha,\beta}])
  \\
  =
  \left[
    \frac{u}{4\pi\sqrt{-1}},
    \frac{1}{2}-\frac{abu}{4\pi\sqrt{-1}};
    \exp
    \left(
      -8\pi\sqrt{-1}
      \left(
        \frac{(\beta ad+\varepsilon\alpha bc)^2}{4ab}-\frac{u}{8\pi\sqrt{-1}}
      \right)
    \right)
  \right],
\end{multline}
where integers $c$ and $d$ are chosen so that $ad-bc=1$, and $\varepsilon=\pm1$.
Note that we are using the $PSL(2;\C)$ normalization and so we need to multiply the exponent in the third entry by $-4$.
\par
Changing the coordinate by using \eqref{eq:coordinate_change}, we have
\begin{equation*}
\begin{split}
  &\cs_{T(a,b)}([\rho_{\alpha,\beta}])
  \\
  =&
  \left[
    \frac{u}{4\pi\sqrt{-1}},
    \frac{1}{2}-\frac{abu}{4\pi\sqrt{-1}}
    +\frac{k-ab-2}{2};
    \exp
    \left(
      \frac{-2k^2\pi\sqrt{-1}}{ab}+u+(k-ab-2)u
    \right)
  \right]
  \\
  =&
  \left[
    \frac{u}{4\pi\sqrt{-1}},
    \frac{v_k(u)}{4\pi\sqrt{-1}};
    \exp
    \left(\frac{-2k^2\pi\sqrt{-1}}{ab}-abu+(k-1)u\right)
  \right]
  \\
  =&
  \left[
    \frac{u}{4\pi\sqrt{-1}},
    \frac{v_k(u)}{4\pi\sqrt{-1}};
    \exp
  \left(
    \frac{2}{\pi\sqrt{-1}}
    \left(
      S_k(\para)-\pi\sqrt{-1}u-\frac{uv_k(u)}{4}
    \right)
  \right)
  \right],
\end{split}
\end{equation*}
where the first equality follows since $k^2=(\beta ad+\varepsilon\alpha bc)^2\pmod{ab}$ and the last equality follows since $u=\para-2\pi\sqrt{-1}$.
Note that the choice of $\varepsilon$ does not matter here.
Note also that even if we change the definition of $(\alpha,\beta)$, the equality still holds (Remark~\ref{rem:def_alpha_beta}).
\end{proof}
\par
Since $v_k(u)=2\frac{d\,S_k(\para)}{d\,\para}\Bigr|_{\para:=u+2\pi\sqrt{-1}}-2\pi\sqrt{-1}$, we note that $\CS_{u,v_k(u)}([\rho_{\alpha,\beta}])$ can be determined by $S_k(\para)$.
\subsection{A topological interpretation of $T_k$.}
We can show that $T_k$ is the twisted Reidemeister torsion associated with the meridian $\mu$.
\begin{lem}
Let $\rho_{\alpha,\beta}$ be an irreducible representation $\pi_1\bigl(S^3\setminus{T(a,b)}\bigr)\to SL(2;\C)$ whose character belongs to the component indexed by $(\alpha,\beta)$ that is determined by $k$ as described in \S~\ref{sec:character_variety}.
Then (up to a sign) the Reidemeister torsion $\mathbb{T}_{\mu}^{T(a,b)}(\rho_{\alpha,\beta})$ associated with the meridian $\mu$ is given by
\begin{equation*}
  \mathbb{T}_{\mu}^{T(a,b)}(\rho_{\alpha,\beta})
  =
  \pm
  \frac{16}{ab}
  \sin^2\left(\frac{\pi\alpha}{a}\right)\sin^2\left(\frac{\pi\beta}{b}\right).
\end{equation*}
\end{lem}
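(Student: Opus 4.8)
The plan is to compute $\mathbb{T}^{T(a,b)}_{\mu}(\rho_{\alpha,\beta})$ directly from the definition~\eqref{eq:torsion}, working with the twisted cochain complex coming from the one-relator presentation $\langle x,y\mid x^{a}y^{-b}\rangle$ and its Fox calculus. This complex is the short one
\[
  \{0\}\longrightarrow\mathfrak{sl}(2;\C)\xrightarrow{\ d^{0}\ }\mathfrak{sl}(2;\C)^{\oplus2}\xrightarrow{\ d^{1}\ }\mathfrak{sl}(2;\C)\longrightarrow\{0\},
\]
where $d^{0}$ is built from $\operatorname{Ad}\rho(x)-I$ and $\operatorname{Ad}\rho(y)-I$, and $d^{1}$ from the Fox derivatives $\partial(x^{a}y^{-b})/\partial x=1+x+\dots+x^{a-1}$ and $\partial(x^{a}y^{-b})/\partial y=-x^{a}(y^{-1}+\dots+y^{-b})$ (all up to the usual inversion/sign conventions, irrelevant since the torsion is only defined up to sign here). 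First I would extract the linear algebra forced by the hypotheses. By irreducibility the central element $x^{a}=y^{b}$ goes to $\pm I$, so $\operatorname{Ad}\rho(x)^{a}=\operatorname{Ad}\rho(y)^{b}=I$ and both $\operatorname{Ad}\rho(x),\operatorname{Ad}\rho(y)$ are diagonalizable; combined with $\Tr\rho(x)=2\cos(\pi\alpha/a)$, $\Tr\rho(y)=2\cos(\pi\beta/b)$ their eigenvalues are $1,e^{\pm2\pi\sqrt{-1}\alpha/a}$ and $1,e^{\pm2\pi\sqrt{-1}\beta/b}$. Hence $\sum_{j=0}^{a-1}\operatorname{Ad}\rho(x)^{j}=a\cdot(\text{projection onto the }1\text{-eigenline }L_{x})$, $\sum_{j=1}^{b}\operatorname{Ad}\rho(y)^{-j}=b\cdot(\text{projection onto }L_{y})$, the prefactor $\operatorname{Ad}\rho(x^{a})=I$ drops out of $\partial(x^{a}y^{-b})/\partial y$, and $\operatorname{Ad}\rho(x)-I$ restricted to the $\operatorname{Ad}\rho(x)$-invariant complement of $L_{x}$ has determinant $2-2\cos(2\pi\alpha/a)=4\sin^{2}(\pi\alpha/a)$, similarly $4\sin^{2}(\pi\beta/b)$ for $y$. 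Irreducibility also gives $H^{0}=0$, $d^{0}$ injective, $L_{x}\ne L_{y}$, hence $\operatorname{rank}d^{1}=2$ and $\dim H^{1}=\dim H^{2}=1$, consistent with $\mu$-regularity.

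Next I would put in the geometric bases of $H^{1}$ and $H^{2}$ of \cite{Porti:MAMCAU1997,Dubois:CANMB2006}. Fix an invariant vector $P\in\mathfrak{sl}(2;\C)^{\operatorname{Ad}\rho(\mu)}$; then $\mathbf{h}^{1}$ is represented by the $1$-cocycle $v_{\mu}$ whose class maps under $i_{\mu}^{\ast}\colon H^{1}\xrightarrow{\ \sim\ }H^{1}(\mu;\rho)$ to the class of $P$, and $\mathbf{h}^{2}$ is represented by the element of $H^{2}\cong H^{2}(\partial E_{T(a,b)};\rho)\cong\C$ determined by the fundamental class $[\partial E_{T(a,b)}]$ together with $P$. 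Choosing a basis of $\mathfrak{sl}(2;\C)$ for $C^{0}$ and $C^{2}$, the induced basis for $C^{1}$, $\mathbf{b}^{0}:=\mathbf{c}^{0}$, and $\mathbf{b}^{1}:=\{(P_{x},0),(0,P_{y})\}$ with $P_{x}\in L_{x}$, $P_{y}\in L_{y}$ (so $d^{1}\mathbf{b}^{1}=\{aP_{x},-bP_{y}\}$), the defining formula~\eqref{eq:torsion} reduces to
\[
  \mathbb{T}^{T(a,b)}_{\mu}(\rho_{\alpha,\beta})
  =\pm\frac{\det\bigl(d^{0}\mathbf{c}^{0},\ v_{\mu},\ (P_{x},0),\ (0,P_{y})\bigr)}{ab\cdot\det(P_{x},P_{y},w)},
\]
where $w$ represents $\mathbf{h}^{2}$ and both determinants are taken in $\mathbf{c}^{1}$, $\mathbf{c}^{2}$. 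Expanding the $6\times6$ determinant along the sparse columns $(P_{x},0)$ and $(0,P_{y})$ and using the block structure together with the explicit form of $v_{\mu}$ (dictated by writing $\mu$ as a word in $x,y$), it collapses to $\pm\,4\sin^{2}(\pi\alpha/a)\cdot4\sin^{2}(\pi\beta/b)\cdot\det(P_{x},P_{y},w)$; the factor $\det(P_{x},P_{y},w)$ cancels, carrying away all dependence on the normalizations of $P,P_{x},P_{y}$ (the vector $P$ enters $\mathbf{h}^{1}$ in the numerator and $\mathbf{h}^{2}$ in the denominator), and one is left with $\mathbb{T}^{T(a,b)}_{\mu}(\rho_{\alpha,\beta})=\pm16\sin^{2}(\pi\alpha/a)\sin^{2}(\pi\beta/b)/(ab)$. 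By Remark~\ref{rem:def_alpha_beta} this is $\pm16\sin^{2}(k\pi/a)\sin^{2}(k\pi/b)/(ab)=\pm T_{k}$.

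The step I expect to be the real obstacle is precisely that last collapse of the determinants: it genuinely requires the fine descriptions of $\mathbf{h}^{1}$ (the meridian generator) and $\mathbf{h}^{2}$ (the $[\partial E_{T(a,b)}]$ generator) in the sense of \cite{Porti:MAMCAU1997,Dubois:CANMB2006}, together with care about signs and about the homology versus cohomology conventions (recall our torsion is the inverse of the homological one). As a consistency check, and an alternative that sidesteps the cohomological bookkeeping, one can run the same Fox calculus to get the twisted Alexander polynomial $\Delta_{T(a,b),\operatorname{Ad}\rho}(t)=\pm(t^{ab}-1)^{3}/\bigl(\det(\operatorname{Ad}\rho(x)t^{b}-I)\det(\operatorname{Ad}\rho(y)t^{a}-I)\bigr)$ up to a unit, note that $t=1$ is a simple zero with $\lim_{t\to1}\Delta_{T(a,b),\operatorname{Ad}\rho}(t)/(t-1)=-(ab)^{2}/\bigl(16\sin^{2}(\pi\alpha/a)\sin^{2}(\pi\beta/b)\bigr)$, and then combine Yamaguchi's theorem (which expresses $\mathbb{T}_{\lambda}$ through this leading coefficient) with the change-of-curve formula of Porti \cite{Porti:MAMCAU1997} and the relation $v\equiv-ab\,u$ on the $(\alpha,\beta)$-component from \S\ref{sec:topology}, giving $\mathbb{T}^{T(a,b)}_{\mu}(\rho_{\alpha,\beta})=-ab\cdot\mathbb{T}^{T(a,b)}_{\lambda}(\rho_{\alpha,\beta})=\pm16\sin^{2}(\pi\alpha/a)\sin^{2}(\pi\beta/b)/(ab)$.
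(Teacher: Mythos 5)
Your primary route (a direct evaluation of the defining formula \eqref{eq:torsion} via Fox calculus on $\langle x,y\mid x^ay^{-b}\rangle$) is a genuinely different approach from the paper's, but as it stands it is not a proof. Everything up to your displayed ratio of determinants is correct and routine: the identification of $d^1$ through $\sum_j\operatorname{Ad}\rho(x)^j=a\,P_{L_x}$, $\sum_j\operatorname{Ad}\rho(y)^{-j}=b\,P_{L_y}$, the factors $4\sin^2(\pi\alpha/a)$, $4\sin^2(\pi\beta/b)$, and the dimension count giving $\dim H^1=\dim H^2=1$. But the assertion that the $6\times6$ determinant ``collapses'' to $\pm\,4\sin^2(\pi\alpha/a)\cdot4\sin^2(\pi\beta/b)\cdot\det(P_x,P_y,w)$ is exactly where all the content lies, and you leave it unproved (you yourself call it ``the real obstacle''). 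Carrying it out requires writing the reference cocycle $v_\mu$ for $\mathbf{h}^1$ and the reference generator of $H^2$ explicitly in the cellular model, which in particular means transporting the class determined by $[\partial E_{T(a,b)}]$ through a homotopy equivalence between the knot exterior and the presentation $2$-complex; this bookkeeping is precisely the computation Dubois performs in \cite{Dubois:CANMB2006}, not a step one can wave through. So the main argument has a genuine gap.

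Your fallback route is, in substance, the paper's actual proof: the paper simply quotes Dubois's value \eqref{eq:Dubois} for $\mathbb{T}_{\lambda}^{T(a,b)}$ and applies Porti's change-of-curve formula \eqref{eq:torsion_base_change} with $\partial v/\partial u=-ab$ on the $(\alpha,\beta)$-component. Your variant (Wada invariant $(t^{ab}-1)^3/\bigl(\det(t^b\operatorname{Ad}\rho(x)-I)\det(t^a\operatorname{Ad}\rho(y)-I)\bigr)$ plus Yamaguchi's theorem) is a legitimate way to rederive Dubois's formula, but as written the arithmetic does not close: your limit $\pm(ab)^2/\bigl(16\sin^2(\pi\alpha/a)\sin^2(\pi\beta/b)\bigr)$ is the homologically normalized torsion, i.e.\ the \emph{inverse} of the $\mathbb{T}_{\lambda}=16\sin^2(\pi\alpha/a)\sin^2(\pi\beta/b)/(a^2b^2)$ that enters \eqref{eq:torsion_base_change} in the paper's cohomological convention, so the chain ``$\mathbb{T}_{\mu}=-ab\,\mathbb{T}_{\lambda}$'' applied to the value your limit produces would give $\pm(ab)^3/\bigl(16\sin^2(\pi\alpha/a)\sin^2(\pi\beta/b)\bigr)$ rather than $\pm16\sin^2(\pi\alpha/a)\sin^2(\pi\beta/b)/(ab)$. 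You flag the homology-versus-cohomology inversion in passing but never execute it; you must invert the Yamaguchi limit (or restate his theorem cohomologically) before multiplying by $\partial v/\partial u=-ab$. With that convention step made explicit, your second route becomes a complete proof essentially identical to the paper's, with \eqref{eq:Dubois} rederived from the twisted Alexander polynomial instead of cited.
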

\begin{proof}
If an irreducible representation $\rho_{\alpha,\beta}$ is in the component indexed by $(\alpha,\beta)$, Dubois \cite[6.2]{Dubois:CANMB2006} proved that the twisted Reidemeister torsion $\mathbb{T}_{\lambda}^{T(a,b)}(\rho_{\alpha,\beta})$ associated with the {\em longitude} $\lambda$ is given by
\begin{equation}\label{eq:Dubois}
  \mathbb{T}_{\lambda}^{T(a,b)}(\rho_{\alpha,\beta})
  =
  \frac{16}{a^2b^2}
  \sin^2\left(\frac{\alpha\pi}{a}\right)
  \sin^2\left(\frac{\beta\pi}{b}\right).
\end{equation}
\par
From Remark (ii) to \cite[Th{\'e}or{\`e}me~4.1]{Porti:MAMCAU1997}, we have
\begin{equation}\label{eq:torsion_base_change}
  \mathbb{T}_{\mu}(\rho_{\alpha,\beta})
  =
  \pm
  \frac{\partial\,v}{\partial\,u}
  \mathbb{T}_{\lambda}(\rho_{\alpha,\beta})
\end{equation}
for an irreducible representation $\rho$.
Here $u$ and $v$ are parameters as described in the previous subsection.
Note that we are using cohomological Reidemeister torsion and Porti uses homological one.
So our torsion is the inverse of the torsion used in \cite{Porti:MAMCAU1997}.
\par
As in the previous subsection $v=-abu+2n\pi\sqrt{-1}$ for a constant $n\in\Z$.
So we have $\partial\,v/\partial\,u=-ab$ and the lemma follows.
\end{proof}
\begin{rem}
In \cite{Porti:MAMCAU1997} Porti uses the twisted homology instead of the twisted cohomology.
So the Reidemeister torsion is the inverse of ours.
The authors thank J.~Dubois for pointing out this.
\end{rem}
Since $\sin^2(k\pi/a)\sin^2(k\pi/b)=\sin^2(\alpha\pi/a)\sin^2(\beta\pi/b)$ (Remark~\ref{rem:def_alpha_beta}), we have
\begin{equation*}
  \mathbb{T}_{\mu}^{T(a,b)}(\rho_{\alpha,\beta})
  =
  \pm
  \frac{16}{ab}
  \sin^2\left(\frac{k\pi}{a}\right)
  \sin^2\left(\frac{k\pi}{b}\right).
\end{equation*}
Since $T_{k}$ is always positive, we have the following theorem.
\begin{thm}
Let $\rho_{\alpha,\beta}$ be an irreducible representation of $\pi_1\bigl(S^3\setminus{T(a,b)}\bigr)$ at $SL(2;\C)$, which is in the component indexed by $(\alpha,\beta)$ that is associated with an integer $k$ as in \S{\rm\ref{sec:character_variety}}.
\par
Then $T_{k}$ equals the absolute value of the twisted Reidemeister torsion of $\rho_{\alpha,\beta}$ associated with the meridian, that is, we have
\begin{equation*}
  T_{k}
  =
  \left|\mathbb{T}_{\mu}^{T(a,b)}(\rho_{\alpha,\beta})\right|.
\end{equation*}
\end{thm}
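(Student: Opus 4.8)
The plan is to obtain the theorem as a direct corollary of the Lemma just proved, combined with the positivity of $T_k$ and the invariance recorded in Remark~\ref{rem:def_alpha_beta}. First recall that, since every irreducible representation of $\pi_1\bigl(S^3\setminus T(a,b)\bigr)$ is $\mu$-regular \cite{Dubois:CANMB2006}, the twisted Reidemeister torsion $\mathbb{T}_{\mu}^{T(a,b)}(\rho_{\alpha,\beta})$ is well defined up to sign, and the Lemma gives
\begin{equation*}
  \mathbb{T}_{\mu}^{T(a,b)}(\rho_{\alpha,\beta})
  =
  \pm\frac{16}{ab}\sin^2\!\left(\frac{\pi\alpha}{a}\right)\sin^2\!\left(\frac{\pi\beta}{b}\right).
\end{equation*}

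Next I would invoke Remark~\ref{rem:def_alpha_beta}: the index $(\alpha,\beta)$ attached to $k$ is well defined only up to the two-to-one ambiguity $(k_1,k_2)\equiv(k,-k)$ or $(-k,k)\pmod{ab}$, and in either case $\sin^2(\pi\alpha/a)\sin^2(\pi\beta/b)=\sin^2(k\pi/a)\sin^2(k\pi/b)$. Substituting this identity into the Lemma's formula and comparing with the definition $T_k=\dfrac{16\sin^2(k\pi/a)\sin^2(k\pi/b)}{ab}$ gives $\mathbb{T}_{\mu}^{T(a,b)}(\rho_{\alpha,\beta})=\pm T_k$.

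Finally I would note that $T_k$ is the product of a positive constant with two squares, hence non-negative; in fact it is strictly positive, since the hypothesis that $k$ is a multiple of neither $a$ nor $b$ forces $\sin(k\pi/a)\ne 0$ and $\sin(k\pi/b)\ne 0$. Taking absolute values in $\mathbb{T}_{\mu}^{T(a,b)}(\rho_{\alpha,\beta})=\pm T_k$ then yields $T_k=\bigl|\mathbb{T}_{\mu}^{T(a,b)}(\rho_{\alpha,\beta})\bigr|$, as claimed. There is essentially no obstacle in this last step: all of the substantive work has already been done in the Lemma, where Dubois's evaluation of the longitudinal torsion is transported to the meridian via Porti's change-of-curve formula and the elementary relation $\partial v/\partial u=-ab$; the only point requiring care there is keeping the cohomological (rather than homological) normalization consistent, and since the present statement concerns only absolute values, even that subtlety is harmless here.
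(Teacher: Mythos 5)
Your proposal is correct and follows essentially the same route as the paper: deduce $\mathbb{T}_{\mu}^{T(a,b)}(\rho_{\alpha,\beta})=\pm T_k$ from the Lemma together with the identity $\sin^2(\alpha\pi/a)\sin^2(\beta\pi/b)=\sin^2(k\pi/a)\sin^2(k\pi/b)$ of Remark~\ref{rem:def_alpha_beta}, then use the positivity of $T_k$ to pass to absolute values. Your additional remarks on $\mu$-regularity and strict positivity of $T_k$ (since $a\nmid k$, $b\nmid k$) only make explicit what the paper leaves implicit.
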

\subsection{Remaining factor}
There remains a strange factor $2\sinh(\para/2)$ in the asymptotic expression.
Recall that we normalize the colored Jones polynomial so that its value for the unknot is one.
Another (more natural in physics) normalization is to put the value for the empty link to be one.
In this normalization the colored Jones polynomial of the unknot is $[N]=\bigl(q^{N/2}-q^{-N/2}\bigr)/\bigl(q^{1/2}-q^{-1/2}\bigr)$.
\par
Then the factor $2\sinh(\para/2)$ comes from the following asymptotic expansion at $q=\exp(\para/N)$ of $\log[N]$.
\begin{equation*}
  \frac{\sinh(\para/2)}{\sinh(\para/(2N))}
  \sim
  2\sinh(\para/2)\frac{N}{\para}
  -
  \frac{\sinh(\para/2)}{12}\left(\frac{N}{\para}\right)^{-1}
  +\cdots.
\end{equation*}
\section{Speculation}
\label{sec:speculation}
Combining the results of Section~\ref{sec:topology}, for a torus knot $K$ and an appropriately chosen parameter $\para$ we have
\begin{multline*}
  \lim_{N\to\infty}
  \left\{\vphantom{\left(\frac{N}{\para}\right)^{1/2}}
    J_N(K;\exp(\para/N))
    \frac{2\sinh(\para/2)}{\nu(\para/N)}
  \right.
  \\
  \left.
    -
    \sqrt{-\pi}
    \sum_{k}
    (-1)^{k+1}
    \left(\frac{N}{\para}\right)^{1/2}
    \exp\left(S_{k}(\para)\frac{N}{\para}\right)
    \left(\mathbb{T}_{\lambda}^{K}(\rho_k)\right)^{1/2}
  \right\}
  =
  \frac{2\sinh(\para/2)}{\Delta(K;\exp{\para})}
\end{multline*}
where $\nu(x)$ is a function that converges to $1$ when $x\to0$, $k$ runs over some irreducible components of the character variety $X(S^3\setminus{K})$, $\rho_k$ is an irreducible representation in the component indexed by $k$, and $S_{k}(\para)$ determines the $SL(2;\C)$ Chern--Simons invariant $\CS_{u,v_k(u)}([\rho_k])$ as in Theorem~\ref{thm:CS}.
We expect a similar formula for a general knot.
\par
Here we just give an observation about the figure-eight knot.
\par
In \cite{Murakami/Yokota:JREIA2007} Yokota and the second author proved that for the figure-eight knot $E$, the following holds.
\begin{thm}[\cite{Murakami/Yokota:JREIA2007}]
There exists a neighborhood $U$ of $0$ in $\C$ such that for any $u\in(U\setminus\pi\sqrt{-1}\Q)\cup\{0\}$, the following limit exists
\begin{equation*}
  (u+2\pi\sqrt{-1})
  \lim_{N\to\infty}
  \frac{\log{J_N\bigl(E;\exp((u+2\pi\sqrt{-1})/N)\bigr)}}{N}.
\end{equation*}
Moreover if we denote the limit by $H(u)$ and put $v(u):=2\frac{d\,H(u)}{d\,u}-2\pi\sqrt{-1}$, then $H(u)-\pi\sqrt{-1}u-uv(u)/4$ coincides with $\CS_{u,v(u)}([\rho])$, where $\rho$ is the representation of $\pi_1(S^3\setminus{E})$ at $SL(2;\C)$ sending the meridian to $\begin{pmatrix}\exp(u/2)&\ast\\0&\exp(-u/2)\end{pmatrix}$ and the longitude to $\begin{pmatrix}-\exp(v(u)/2)&\ast\\0&-\exp(-v(u)/2)\end{pmatrix}$ up to conjugate.
\end{thm}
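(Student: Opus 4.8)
The plan is to extract the large-$N$ asymptotics of $J_N\bigl(E;\exp(\para/N)\bigr)$, with $\para=u+2\pi\sqrt{-1}$, from an explicit state sum, to write the dominant contribution as the critical value of a dilogarithm potential, and then to identify that critical value with the Chern--Simons invariant. Concretely, I would start from the well-known formula for the colored Jones polynomial of the figure-eight knot,
\begin{equation*}
  J_N(E;q)
  =
  \sum_{j=0}^{N-1}
  \prod_{l=1}^{j}
  \bigl(q^{(N-l)/2}-q^{-(N-l)/2}\bigr)
  \bigl(q^{(N+l)/2}-q^{-(N+l)/2}\bigr),
\end{equation*}
and substitute $q=\exp(\para/N)$, so that the $l$-th factor is $2\sinh\bigl(\para/2\pm l\para/(2N)\bigr)$. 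Taking logarithms turns the product into a Riemann sum, and Euler--Maclaurin (equivalently, the asymptotic expansion of the quantum dilogarithm) shows that each summand equals $\exp\bigl(\tfrac{N}{\para}\Phi(\para;j/N)+O(\log N)\bigr)$ for a potential function $\Phi(\para;w)$ built from $\Li_2$. The hypothesis $u\notin\pi\sqrt{-1}\Q$ is precisely what keeps $e^{u}$ away from roots of unity, so these estimates and their error bounds are uniform. Replacing the sum over $j$ by an integral over $w$ and truncating to a neighbourhood of the locus where $\Re\bigl(\Phi/\para\bigr)$ is maximal, one reduces the problem, up to subexponential factors, to understanding $\int e^{(N/\para)\Phi(\para;w)}\,dw$.

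Next I would treat this integral by the steepest descent method. Solving $\partial\Phi/\partial w=0$ produces a critical point $w_0(u)$; for $u=0$ this equation is Thurston's gluing equation for the two-tetrahedron ideal triangulation of $S^3\setminus E$, whose solution is the complete hyperbolic structure, and for $u$ in a small neighbourhood $U$ of $0$ one continues $w_0(u)$ by the implicit function theorem. The saddle point method then gives
\begin{equation*}
  (u+2\pi\sqrt{-1})\lim_{N\to\infty}\frac{\log J_N\bigl(E;\exp(\para/N)\bigr)}{N}
  =
  \Phi\bigl(\para;w_0(u)\bigr)=:H(u),
\end{equation*}
which in particular establishes the existence of the limit. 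The main obstacle, as I see it, is the contour analysis: one must show that the original integration path can be deformed across the saddle $w_0(u)$ without acquiring a larger contribution from another critical point or from the poles and zeros of the integrand, and one must keep the sum-to-integral error terms uniform near the boundary of the region of convergence. This amounts to a careful study of the topology of the level sets $\{\Re(\Phi/\para)=\text{constant}\}$, which for $u$ near $0$ can be controlled by perturbing the situation at $u=0$.

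Finally, to identify $H(u)$ with the Chern--Simons invariant I would interpret $\Phi\bigl(\para;w_0(u)\bigr)$ geometrically. The critical value of a sum of dilogarithms attached to an ideal triangulation is the complexified volume of the associated (generally incomplete) hyperbolic structure, and its dependence on the cusp-shape parameter is governed by the Neumann--Zagier symplectic relations; combining this with Yoshida's identification of the complexified volume with the $SL(2;\C)$ Chern--Simons invariant, in the $PSL(2;\C)$ normalisation of \cite{Kirk/Klassen:COMMP93} used in Section~\ref{sec:Chern_Simons_invariant}, one obtains that, with $v(u):=2\,dH/du-2\pi\sqrt{-1}$, the quantity $H(u)-\pi\sqrt{-1}u-uv(u)/4$ equals $\CS_{u,v(u)}([\rho])$ for the representation $\rho$ described in the statement. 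The appearance of exactly the boundary terms $-\pi\sqrt{-1}u$ and $-uv(u)/4$ is forced by the definition of $\CS_{u,v}$ via the $\C^{\ast}$-bundle $E(\partial M)$ and the coordinate change \eqref{eq:coordinate_change}, and is verified by differentiating $H$ and matching with the Neumann--Zagier relation $v=2\,df/du$.
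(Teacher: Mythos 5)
Note first that this theorem is not proved in the paper at all: it is quoted from \cite{Murakami/Yokota:JREIA2007} as background for the speculation in Section~\ref{sec:speculation}, so the only meaningful comparison is with that reference. Your outline does follow the same overall strategy as the cited proof (Habiro-type formula for $J_N(E;q)$, quantum-dilogarithm/Euler--Maclaurin asymptotics of the summand, reduction to a saddle-point integral for a dilogarithm potential, identification of the critical value with a Chern--Simons invariant), but as a proof it has genuine gaps beyond being a sketch.

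First, the analytic core is missing, and you say so yourself: the uniform sum-to-integral estimates when $e^{u}$ is close to (but not equal to) a root of unity, the choice and justification of the steepest-descent contour, and the verification that no other saddle point or pole of the integrand contributes a larger term constitute essentially the whole content of \cite{Murakami/Yokota:JREIA2007}; flagging the contour analysis as ``the main obstacle'' does not discharge it, and the existence of the limit is exactly what is at stake there. Second, the identification step is too loose to pin down the conclusion as stated. You must show that the critical point $w_0(u)$ corresponds to the specific representation $\rho$ with meridian eigenvalue $e^{u/2}$ and longitude eigenvalue $-e^{v(u)/2}$ for $v(u)=2\,dH/du-2\pi\sqrt{-1}$, i.e.\ that differentiating the critical value reproduces the logarithm of the longitude holonomy with the correct lift modulo $2\pi\sqrt{-1}$ (the terms $-\pi\sqrt{-1}u$ and $-uv(u)/4$, and the sign in $-e^{v/2}$, depend precisely on this choice of branch); this requires matching the critical point equation with the $A$-polynomial relation \eqref{eq:A_fig8}, which you never do. Moreover, Yoshida's identification of the complexified volume with the Chern--Simons invariant concerns the complete (or closed, after filling) case, whereas here $u$ parametrizes incomplete structures on the knot exterior; the route actually used is the Kirk--Klassen variation formula \cite{Kirk/Klassen:COMMP93}: one checks that both sides of the claimed identity have the same $u$-derivative (via the Neumann--Zagier relation) and agree at $u=0$, where the value is the complete volume plus Chern--Simons term, all in the $PSL(2;\C)$ normalization (the factor $-4$) recalled in Section~\ref{sec:Chern_Simons_invariant}. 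Without these verifications the equality $H(u)-\pi\sqrt{-1}u-uv(u)/4=\CS_{u,v(u)}([\rho])$ is asserted rather than proved.
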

The $SL(2;\C)$ character variety of $S^3\setminus{E}$ has two connected components, the abelian one and the non-abelian one.
\par
Non-abelian representations can be calculated explicitly by using the technique described in \cite{Riley:QUAJM31984} (see also \cite[\S3.1]{Murakami:ACTMV2008}).
Let $\rho_{m\pm}$ be the non-abelian representation of $\pi_1(S^3\setminus{E})$ at $SL(2;\C)$ sending the meridian to
\begin{equation*}
  \begin{pmatrix}
    m^{1/2}&1 \\
    0      &m^{-1/2}
  \end{pmatrix}
\end{equation*}
and the longitude to
\begin{equation*}
  \begin{pmatrix}
    \ell(m)^{\pm1} &
    \left(m^{1/2}+m^{-1/2}\right)
    \sqrt{(m+m^{-1}+1)(m+m^{-1}-3)} \\
    0 &
    \ell(m)^{\mp1}
  \end{pmatrix},
\end{equation*}
where
\begin{equation}\label{eq:ell}
  \ell(m)
  :=
  \frac{m^2-m-2-m^{-1}+m^{-2}}{2}
  +
  \frac{m-m^{-1}}{2}
  \sqrt{(m+m^{-1}+1)(m+m^{-1}-3)}.
\end{equation}
See \cite[\S3.1]{Murakami:ACTMV2008} for details.
Note that the pair $(m,\ell(m))$ is a zero of the $A$-polynomial
\begin{equation}\label{eq:A_fig8}
  \ell-\left(m^2-m-2-m^{-1}+m^{-2}\right)+\ell^{-1}.
\end{equation}
\begin{rem}
Equation~(3.8) in \cite{Murakami:ACTMV2008} is mistyped.
It should be read as
\begin{equation*}
  \ell-\left(m^2-m-2-m^{-1}+m^{-2}\right)+\ell^{-1}=0.
\end{equation*}
The authors thank E.~Witten, who pointed out this.
\end{rem}
In \cite[\S6.3]{Dubois:CANMB2006} Dubois proves that the twisted Reidemeister torsion $\mathbb{T}_{\lambda}^{E}(\rho_{m\pm})$ associated with the longitude $\lambda$ is given by
\begin{equation*}
  \mathbb{T}_{\lambda}^{E}(\rho_{m\pm})
  =
  \frac{1}{\sqrt{17+4\Tr(\rho_{m\pm}(\lambda))}}
  =
  \frac{1}{2m+2m^{-1}-1}.
\end{equation*}
See also \cite[\S4.5]{Porti:MAMCAU1997} and \cite{Dubois/Huynh/Yamaguchi:JKNOT2009}.
\par
Therefore from \eqref{eq:torsion_base_change} the twisted Reidemeister torsion associated with the meridian $\mu$ is
\begin{equation*}
  \mathbb{T}_{\mu}^{E}(\rho_{m\pm})
  =
  \pm
  \frac{\partial\,v}{\partial\,u}
  \times
  \frac{1}{2m+2m^{-1}-1}.
\end{equation*}
Since in this case $e^{u/2}=m^{1/2}$ and $e^{v/2}=-\ell(m)^{\pm1}$, we have
\begin{equation*}
\begin{split}
  \frac{\partial\,v}{\partial\,u}
  &=
  \frac{\pm\partial\bigl(2\log{\ell(m)\bigr)/\partial\,m}}
       {\partial(\log{m})/\partial\,m}
  \\
  &=
  \frac{\pm2m\,d\,\ell(m)/d\,m}{\ell(m)}.
\end{split}
\end{equation*}
Since the pair $(m,\ell(m))$ is a zero of the A-polynomial, differentiating \eqref{eq:A_fig8} by $m$ we have
\begin{equation*}
  \frac{d\,\ell(m)}{d\,m}
  =
  \frac{2m-1+m^{-2}-2m^{-3}}{1-\ell(m)^{-2}}.
\end{equation*}
Therefore we finally have
\begin{equation*}
\begin{split}
  \mathbb{T}_{\mu}^{E}(\rho_{m\pm})
  &=
  \frac{\pm2}{\sqrt{(m+m^{-1}+1)(m+m^{-1}-3)}}.
\end{split}
\end{equation*}
\par
By some computer calculations the following formula seems to hold.
\begin{multline*}
  \lim_{N\to\infty}
  \left\{
    J_{N}\bigl(E;\exp(\para/N)\bigr)
    \frac{2\sinh(\para/2)}{\nu(\para/N)}
    -
    \sqrt{-\pi}
    \exp\left(H(u)\frac{N}{\para}\right)
    \left(\frac{N}{\para}\right)^{1/2}
    \sqrt{\mathbb{T}_{\mu}^{E}(\rho_{m\pm})}
  \right\}
  \\
  =
  \frac{2\sinh(\para/2)}{\Delta(E;\exp{\para})}
\end{multline*}
where $\nu(x)$ is a function with $\lim_{x\to0}\nu(x)=1$.
\par
For a hyperbolic knot $K$, we expect a similar formula.
\begin{multline*}
  \lim_{N\to\infty}
  \left\{
    J_{N}\bigl(K;\exp(\para/N)\bigr)
    \frac{2\sinh(\para/2)}{\nu(\para/N)}
    -
    \sqrt{-\pi}
    \exp\left(H(u)\frac{N}{\para}\right)
    \left(\frac{N}{\para}\right)^{1/2}
    \sqrt{\mathbb{T}_{\mu}^{K}(\rho)}
  \right\}
  \\
  =
  \frac{2\sinh(\para/2)}{\Delta(E;\exp{\para})},
\end{multline*}
where $\nu(x)$ is a function with $\lim_{x\to0}\nu(x)=1$ and we put $u:=\para-2\pi\sqrt{-1}$.
Moreover  $\rho$, $H(u)$ and $\mathbb{T}_{\mu}^{K}(\rho)$ satisfy the following properties.
Put $v(u):=2\frac{d\,H(u)}{d\,u}-2\pi\sqrt{-1}$.
\begin{itemize}
\item
$\rho\colon\pi_1(S^3\setminus{K})\to SL(2;\C)$ sends the meridian to $\begin{pmatrix}\exp(u/2)&\ast\\0&\exp(-u/2)\end{pmatrix}$ and the longitude to $\begin{pmatrix}-\exp(v(u)/2)&\ast\\0&-\exp(-v(u)/2)\end{pmatrix}$ up to conjugate.
\item
$H(u)-\pi\sqrt{-1}u-uv(u)/4$ coincides with $\CS_{u,v(u)}([\rho])$.
\item
$\mathbb{T}_{\mu}^{K}(\rho)$ is the twisted Reidemeister torsion of $\rho$ associated with the meridian.
\end{itemize}
\bibliography{mrabbrev,hitoshi}
\bibliographystyle{amsplain}
\end{document}